\newtheorem{theorem}{Theorem}[section]
\newtheorem{lemma}[theorem]{Lemma}
\newtheorem{corollary}[theorem]{Corollary}
\theoremstyle{remark}
\newtheorem{remark}[theorem]{Remark}
\theoremstyle{definition}
\newtheorem{definition}[theorem]{Definition}
\newtheorem{proposition}[theorem]{Proposition}
\newtheorem{example}[theorem]{Example}
\begin{document}

\title[The $G$-connected property and $G$-topological groups]
{The $G$-connected property and $G$-topological groups}

\author{Yongxing Wu}
\address{(Yongxing Wu): School of mathematics and statistics,
Minnan Normal University, Zhangzhou 363000, P. R. China}
\email{}

\author{Fucai Lin*}
\address{(Fucai Lin): School of mathematics and statistics,
Minnan Normal University, Zhangzhou 363000, P. R. China}
\email{linfucai@mnnu.edu.cn; linfucai2008@aliyun.com}

\thanks{The authors are supported by the NSFC (No. 11571158), the Natural Science Foundation of Fujian Province (No. 2017J01405) of China, the Program for New Century Excellent Talents in Fujian Province University, the Institute of Meteorological Big Data-Digital Fujian and Fujian Key Laboratory of Date Science and Statistics.}

\keywords{$G$-methods; $G$-closed, $G$-interiors; $G$-connected; $G$-topological groups; $G$-open.}
\subjclass[2010]{Primary 40J05; Secondary 54A05, 22A05.}

\begin{abstract}
In this paper, we discuss some properties of of $G$-hull, $G$-kernel and $G$-connectedness, and extend some results of \cite{life34}. In particular, we prove that the $G$-connectedness are preserved by countable product. Moreover, we introduce the concept of $G$-topological group, and prove that a $G$-topological group is a $G$-topology under the assumption of the regular method preserving the subsequence.
\end{abstract}

\maketitle
\section{Introduction}
$G$-convergence is one of the important research objects in topology and analysis. On one hand, $G$-convergence is closely related to $G$-compactness, $G$-continuity and other related properties. On the other hand, it has played a fundamental role in mathematics and other fields applications. In addition to the general convergence of sequences, there are various
convergence type that are very important, both in pure mathematics and in other branches of science involving mathematics especially in information theory, biological science and dynamical systems. On the basis of several kinds of convergence properties in real analysis, Grosse-Erdmann and Connor \cite{life} discussed $G$-methods, which was defined on $G$-convergence on real spaces, a linear subspace of the vector space of all real sequences, and $G$-continuity for real functions, expounds the relationship among $G$-continuous functions, linear functions and continuous functions, generalized several known results and established the dichotomy theorem of $G$-continuity.

In 1946, the American Mathematical Monthly problem \cite{life1} put forward the idea of $G$-convergence, which is an extension of the idea of common convergence. Some authors, including Antoni \cite{life5}, Antoni and Salat \cite{life6}, Iwinski \cite{life3}, Posner \cite{life2}, Srinivasan \cite{life4} and Spigel and Krupnik \cite{life7} and so on, have studied $A$-continuity defined by a regular summability matrix $A$. Others authors, including Borsik and Salat \cite{life11}, \"{O}zt\"{u}rk \cite{life8}, Savas \cite{life9}, Savas and Das \cite{life10} and and so on, have studied $A$-continuity for methods of almost convergence or for related methods. In \cite{life12}, the authors introduced summability matrices. And in \cite{life13}, the authors discussed summability in topological groups. Ko\v{c}inac and Di Maio defined statistical convergence in topological spaces by \cite{life14}, discussed statistically Fr\'{e}chet spaces and statistically sequential spaces, and studied their applications in selection function spaces, hyperspaces, principles theory and so on.

In 1935, Zygmund first used the name ``almost convergence" to describe the concept of statistical convergence in his celebrated monograph published in Warsaw \cite{life15}. In 1951, Fast \cite{life16} and Steinhaus \cite{life17} were introduced respectively the notion of statistical convergence for real or complex number sequences. After that, it was reintroduced by Schoenberg \cite{life18}. And also, independently, by Buck \cite{life19}.

\c{C}akalli defined and studied the statistical convergence in topological groups in \cite{life20, life21, life22, life23}. In addition, \c{C}akalli \cite{life24} has also defined and studied $G$-sequential open and $G$-sequential closed subsets of a first countable Hausdorff topological group. And \c{C}akalli and Khan \cite{life25} defined and studied the statistical convergence in first countable Hausdorff topological spaces. In particular, by \cite{life14}, the authors has also introduced and investigated the notion of statistical convergence in topological spaces and uniform spaces, and show how this convergence can be extended to selection principles theory, function spaces, hyperspaces and so on. In \cite{life22}, \c{C}akalli has introduced the concept of $G$-compactness and  he has proved that the $G$-continuous image of any $G$-compact subset of X is also $G$-compact \cite[Theorem 7]{life22}. \c{C}akalli investigated $G$-continuity, and obtained further results in \cite{life26}. The reader can refer to
\cite{life27, life29, life30, life37, life28} for some other types of continuities which can not be given by any sequential method.

Recently, S. Lin and L. Liu in \cite{life31} discussed the concepts of $G$-method and $G$-convergence in topological spaces, and defined $G$-neighborhoods and studied the properties of $G$-continuity of mappings and so on. As is known to all, topological space can be described by some important sets, such as open sets, closed sets, neighbourhoods and derived sets, define convergence and depict continuity in topological spaces. Moreover, L. Liu gave some properties of $G$-neighborhoods, $G$-continuity at a point, $G$-derived sets and $G$-boundaries of a set in \cite{life32, life33}.

In this paper, the content is organized as follows. In this section 2, we introduce the necessary notation and terminology which are used for the rest of the paper. In section 3, we discuss some properties of $G$-hull and $G$-kernel and generalizes some results of \cite{life34}. In section 4, we study the $G$-connectedness and prove that the $G$-connectedness is preserved by countable product. In section 5, we introduce the concept of $G$-topological group, and prove that a $G$-topological group is a $G$-topology.

\smallskip
\section{Prelinminaries}
 Throughout this paper, $\mathbb{N}$ denotes the set of all positive integers and $X$ denotes a Hausdorff topological group with operations as defined in \cite[Definition 3.1]{life34}. We use boldface letters {\boldmath$x$}, {\boldmath$y$}, {\boldmath$z$}, $\cdots$ for sequences {\boldmath$x$}=($x_{n}$), {\boldmath$y$}=($y_{n}$), {\boldmath$z$}=($z_{n}$), $\cdots$ of terms of $X$. The sets $s(X)$ and $c(X)$ respectively denote all $X$-valued sequences and all $X$-valued convergent sequences of points in $X$. $G$ will be a regular method (see Definition~\ref{def1.1.2})
unless otherwise is stated.

\begin{definition}\label{def0.0.1}\cite[Definition 3.1]{life34}
Let {\boldmath$C$} be a category of groups with a set of operations $\Omega$ and with a set {\boldmath$E$} of identities such that {\boldmath$E$} includes the group laws, and the following conditions hold: If $\Omega_{i}$ is the set of $i$-ary operations in $\Omega$ for each $i=0, 1, 2$, then:

\smallskip
(1) $\Omega=\Omega_{0}\cup \Omega_{1}\cup \Omega_{2}$;

\smallskip
(2) The group operations written additively 0, $-$ and $+$ are the elements of $\Omega_{0}$, $\Omega_{1}$ and $\Omega_{2}$ respectively. Let $\Omega_{2}'=\Omega_{2}\setminus\{+\}$, $\Omega_{1}'=\Omega_{1}\setminus\{-\}$ and assume that if $\star \in \Omega_{2}'$, then $\star^{\circ}$ defined by $x \star^{\circ} y=y \star x$ is also in $\Omega_{2}'$. Also assume that $\Omega_{0}=\{0\}$;

\smallskip
(3) For each $\star\in \Omega_{2}'$, {\boldmath$E$} includes the identity $x\star(y+z)=x\star y+x\star z$;

\smallskip
(4) For each $\omega\in \Omega_{1}'$ and $\star\in \Omega_{2}'$, {\boldmath$E$} includes the identities $\omega(x+y)=\omega(x)+\omega(y)$ and $\omega(x)\star y=\omega(x\star y)$.

\smallskip
Then the category {\boldmath$C$} satisfying the conditions (1)$-$(4) is called {\it a category of groups with operations}.
\end{definition}

\begin{definition}\label{def0.0.2}\cite[Definition 3.4]{life34}
Let $X$ be a group with operations, i.e., an object of {\boldmath$C$}. A subset $A\subseteq X$ is called a {\it subgroup with operations} subject to the following conditions:

\smallskip
(1) $a\star b\in A$ for $a,b\in A$ and $\star \in \Omega_{2}$;

\smallskip
(2) $\omega(a)\in A$ for $a\in A$ and $\omega\in \Omega_{1}$.
\end{definition}

\begin{definition}\label{def0.0.4}\cite[Definition 3.6]{life34}
A category {\boldmath$Top^{C}$} of topological groups with a set $\Omega$ of continuous operations and with a set {\boldmath$E$} of identities such that {\boldmath$E$} includes the group laws such that the conditions (1)$-$(4) of Definition~\ref{def0.0.1} are satisfied, is called a {\it category of topological group with operations}.
A {\it morphism} between any two objects of {\boldmath$Top^{C}$} is a continuous group homomorphism, which preserves the operations in $\Omega_{1}'$ and $\Omega_{2}'$.
\end{definition}

The categories of topological groups, topological rings and topological $R$-modules are examples of categories of topological groups with operations.

 \begin{definition}\label{def1.1.2}
Let $X$ be a topological space.

\smallskip
(1) A method $G$ : $c_{G}(X) \to X$ is called {\it regular} \cite[Definition 1.1]{life31} if $c(X) \subset c_{G}(X$) and $G$({\boldmath$x$})=lim{\boldmath$x$} for each {\boldmath$x$} $\in$ $c$($X$), where lim denotes the limit function lim$x$ = lim$_{n\rightarrow\infty}$$x_{n}$ on $c(X)$.

\smallskip
(2) A method $G$ : $c_{G}(X) \to X$ is called {\it subsequential} \cite[Definition 1.1]{life31} if, whenever {\boldmath$x$} $\in c_{G}(X$) is $G$-convergent to $l\in X$, then there exists a subsequence {\boldmath$x^{'}$} $\in c(X$) of {\boldmath$x$} with lim{\boldmath$x^{'}$}=$l$.

\smallskip
(3) We say a method $G$ {\it preserves the $G$-convergence of subsequences} \cite[pp.1083]{life34} if, whenever a sequence {\boldmath$x$} is $G$-convergent with $G$({\boldmath$x$})=$l$, then any subsequence of {\boldmath$x$} is $G$-convergent the same point $l$.
\end{definition}

As a generalization of the concept of closures in topological spaces, $G$-hulls and $G$-closures are essential concepts in $G$-method, see \cite{life31}.

 \begin{definition}\label{def2.1}\cite[Definition 2.1]{life31}
Let $X$ be a set, and $G$ be a method on $X$. A subset $A$ of $X$ is called a {\it $G$-closed set} of $X$ if, whenever {\boldmath$x$} $\in s(A)\cap c_{G}(X$), then $G$({\boldmath$x$}) $\in A$. The set $A$ of $X$ is called {\it $G$-open} if $X\setminus A$ is $G$-closed in $X$.
\end{definition}

\begin{definition}\label{def2.2}
Let $X$ be s set, $G$ be a method on $X$ and $A$ $\subset$ $X$.

\smallskip
(1) The {\it $G$-hull} \cite[pp.102]{life} of $A$ is defined as the set $\{G$({\boldmath$x$}) : {\boldmath$x$} $\in s(A)\cap c_{G}(X)\}$, and the $G$-hull of $A$ is denoted by [$A$]$_{G}$.

\smallskip
(2) The {\it $G$-closures }\cite[Definition 2.4]{life31} of $A$ is defined as the intersection of all $G$-closed sets containing $A$, and the $G$-closures of $A$ is denoted by $\overline{A}$$^{G}$.

\smallskip
(3) We say that a subset of $X$ is {\it $G$-dense }\cite[pp.7]{life35} in $X$ if $\overline{A}^{G}=X$.

\smallskip
(4) The {\it $G$-kernel} of $A$ is defined as the set $\{l\in X$ : there is no {\boldmath$x$} $\in s(X\setminus A)\cap c_{G}(X$) with $l=G$({\boldmath$x$})$\}$, and the $G$-kernel of
$A$ is denoted by ($A)_{G}$.

\smallskip
(5) The {\it $G$-interior} of $A$ is defined as the union of all $G$-open sets contained in $A$, and the $G$-interior of $A$ is denoted by $A^{\circ G}$ is also $G$-open.

\smallskip
(6) A subset $A$ of $X$ is called a {\it $G$-neighborhood} of a point $x\in X$ if there exists a $G$-open set $U$ with $x \in U\subset A$.

\smallskip
(7) A subset $F$ of $A$ is called {\it $G$-closed} in $A$ if there exists a $G$-closed subset $K$ of $X$ such that $F=K\cap A$.
\end{definition}

\begin{remark}\label{rem2.3}
(1) Let $\widetilde{X}$ be the product $\prod_{\alpha\in I}X_{\alpha}$ of countable number of copies of $X_{\alpha}$. Denote a sequence in $\widetilde{X}$ by $\widetilde{\textbf{x}}=(x^{n})$ with $x^{n}=(x^{n}_{\alpha})_{\alpha\in I}$ for $n \in \mathbb{N}$. Then we have a method $\widetilde{G}$ on $\widetilde{X}$ defined by $\widetilde{G}(\widetilde{\textbf{x}})=G(\pi_{\alpha}(x^{n}))_{\alpha\in I}$. Hence $c_{\widetilde{G}}(\widetilde{X})$ has the sequences in $\widetilde{X}$ such that for each $\alpha\in I$ the sequence $(\pi_{\alpha}(x^{n}))$ is in $c_{G}(X)$, where $\pi_{\alpha}$ is the projection mapping to $X_{\alpha}$.

\smallskip
(2) Obviously, $A$ is $G$-closed if [$A]_{G}\subset A$. Moreover, if $G$ is regular method, then $A\subset[A]_{G}$, and hence $A$ is $G$-closed if and only if [$A]_{G}=A$.

\smallskip
(3) A subset $A$ of $X$ is called $G$-open if and only if $A$ $\subset (A)_{G}$.

\smallskip
(4) The empty set $\emptyset$ and the whole space $X$ are $G$-closed. It is clear that $\overline{\emptyset}^{G}=\emptyset$ and $\overline{X}^{G}=X$ for a regular method $G$. If $G$ is a regular method, then $A\subset \overline{A} \subset \overline{A}^{G}$. Even for a regular method, it is not always true that $\overline{(\overline{A}^{G})}^{G}=\overline{A}^{G}$.

\smallskip
(5) The union of any $G$-open subsets of $X$ is $G$-open. Here note that a subset $U$ of $A$ is $G$-open in $A$ if and only if there exists a $G$-open subset $V$ of $X$ such that $U=A \cap V$.
\end{remark}

\begin{definition}\label{def5.2}
Let $X$ be a set.

\smallskip
(1) A mapping $f:X \to X$ is called {\it $G$-continuous} \cite[pp.1080]{life34} if $G(f$({\boldmath{$x$}}))=$f(G$({\boldmath{$x$}})) for {\boldmath{$x$}}$\in c_{G}(X)$.

\smallskip
(2) A method $G$ is called {\it translate regular} if $G(g${\boldmath{$x$}})=$gG$({\boldmath{$x$}}) for {\boldmath{$x$}}$\in c_{G}(X)$ and $g \in X$, where $X$ is a group with operations.
\end{definition}

\begin{definition}\label{def4.1}\cite[Definition 4.1]{life34}
A non-empty subsets $A$ of $X$ is called {\it $G$-connected} if there are no non-empty disjoint $G$-closed subsets $F$ and $K$ of $A$ such that $A$=$F \bigcup K$. Particulary $X$ is called $G$-connected, if there are no non-empty, disjoint $G$-closed subsets of $X$ whose union is $X$.
\end{definition}

\smallskip
\section{Some properties of $G$-hull and $G$-kernel}
In this section, we discuss some properties of $G$-hull and $G$-kernel and generalizes some results of \cite{life34}. First, the following theorem give an improvement of \cite[Proposition 3.15]{life34}.

\begin{theorem}\label{the2.4}
Let $A_{t}$ be a subset of $X$ for each $t\in \Gamma$. Then we have the following statements.
 \begin{enumerate}
\item $[\prod_{t\in \Gamma}A_{t}]_{G}=\prod_{t\in \Gamma}[A_{t}]_{G}$.

\smallskip
\item If $A_{t}$ is $G$-closed for each $t\in \Gamma$, then $\prod_{t\in \Gamma}A_{t}$ is $G$-closed.
 \end{enumerate}
\end{theorem}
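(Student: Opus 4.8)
The plan is to carry out both parts directly from the definition of the product method $\widetilde{G}$ recorded in Remark~\ref{rem2.3}(1), where the $G$-hull and $G$-closedness on $\prod_{t\in\Gamma}A_{t}$ are understood with respect to $\widetilde{G}$ on $\widetilde{X}=\prod_{t\in\Gamma}X_{t}$. The single fact I will use repeatedly is that a sequence $\widetilde{\mathbf{x}}=(x^{n})$ in $\widetilde{X}$ lies in $c_{\widetilde{G}}(\widetilde{X})$ precisely when each coordinate sequence $(\pi_{t}(x^{n}))_{n}$ lies in $c_{G}(X)$, and that in this case $\pi_{t}(\widetilde{G}(\widetilde{\mathbf{x}}))=G((\pi_{t}(x^{n}))_{n})$ for every $t\in\Gamma$.

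For (1) I would argue by double inclusion. To see $[\prod_{t}A_{t}]_{\widetilde{G}}\subseteq\prod_{t}[A_{t}]_{G}$, take a point $p=\widetilde{G}(\widetilde{\mathbf{x}})$ with $\widetilde{\mathbf{x}}\in s(\prod_{t}A_{t})\cap c_{\widetilde{G}}(\widetilde{X})$; then for each $t$ the coordinate sequence $(\pi_{t}(x^{n}))_{n}$ lies in $s(A_{t})\cap c_{G}(X)$ and $G$-converges to $\pi_{t}(p)$, so $\pi_{t}(p)\in[A_{t}]_{G}$ and hence $p\in\prod_{t}[A_{t}]_{G}$. For the reverse inclusion I would use a coordinatewise (diagonal-type) assembly: given $p$ with $\pi_{t}(p)\in[A_{t}]_{G}$ for every $t$, choose for each $t$ a witness sequence $(y^{t}_{n})_{n}\in s(A_{t})\cap c_{G}(X)$ with $G((y^{t}_{n})_{n})=\pi_{t}(p)$, and set $x^{n}:=(y^{t}_{n})_{t\in\Gamma}$. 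Then each $x^{n}\in\prod_{t}A_{t}$, every coordinate sequence is $G$-convergent, so $\widetilde{\mathbf{x}}=(x^{n})\in s(\prod_{t}A_{t})\cap c_{\widetilde{G}}(\widetilde{X})$, and $\widetilde{G}(\widetilde{\mathbf{x}})=(\pi_{t}(p))_{t}=p$; thus $p\in[\prod_{t}A_{t}]_{\widetilde{G}}$.

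Part (2) then follows formally from (1) together with Remark~\ref{rem2.3}(2): a set is $G$-closed exactly when its $G$-hull is contained in it. If each $A_{t}$ is $G$-closed, then $[A_{t}]_{G}\subseteq A_{t}$ for every $t$, whence by (1) $[\prod_{t}A_{t}]_{\widetilde{G}}=\prod_{t}[A_{t}]_{G}\subseteq\prod_{t}A_{t}$, so $\prod_{t}A_{t}$ is $\widetilde{G}$-closed.

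I expect no serious obstacle here: the content is essentially bookkeeping with the definition of $\widetilde{G}$. The one point deserving care is the reverse inclusion in (1), where I must verify that the sequence $\widetilde{\mathbf{x}}$ assembled coordinatewise from the chosen witnesses really belongs to $c_{\widetilde{G}}(\widetilde{X})$ — this is exactly the characterization of $\widetilde{G}$-convergence as coordinatewise $G$-convergence, so it is immediate once one witness sequence per coordinate has been selected.
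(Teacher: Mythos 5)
Your proof is correct and follows essentially the same route as the paper: a double inclusion for (1) using the coordinatewise characterization of $\widetilde{G}$ from Remark~\ref{rem2.3}(1), followed by deducing (2) from (1) via the criterion $[A]_{G}\subseteq A$ of Remark~\ref{rem2.3}(2). Your explicit assembly $x^{n}:=(y^{t}_{n})_{t\in\Gamma}$ of the witness sequence in the reverse inclusion is exactly the step the paper performs (somewhat implicitly, writing the chosen witnesses directly as $\pi_{\alpha}(x^{n})$), so the two arguments coincide, with yours stated a bit more carefully.
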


\begin{proof}
(1) Let $u=(u_{\alpha})_{\alpha\in I}$ in $[\prod_{\alpha\in I}A_{\alpha}]_{G}$. Then there is a sequence
{\boldmath{$\widetilde{x}$}}=$(x^{n})$  in \mdseries{$\prod_{\alpha\in I}A_{\alpha}$} such that $\widetilde{G}$({\boldmath$\widetilde{x}$})= $u$.
Hence G($\pi_{\alpha}(x^{n}))=u_{\alpha}$ for each $\alpha\in I$, and therefore $u\in \prod_{\alpha\in I}[A_{\alpha}]_{G}$. By the arbitrary of $u$, ones have [$\prod_{\alpha\in I}A_{\alpha}]_{G}\subset\prod_{\alpha\in I}[A_{\alpha}]_{G}$.

On the other hand, take an arbitrary ($a_{\alpha})_{\alpha\in I}$ in $\prod_{\alpha\in I}[A_{\alpha}]_{G}$. Then there are sequences $\pi_{\alpha}(x^{n})$ of the points $A_{\alpha}$  such that
G$(\pi_{\alpha}(x^{n}))=a_{\alpha}$ for each $\alpha\in I$. Since G$(\pi_{\alpha}(x^{n}))=a_{\alpha}$ for each $\alpha\in I$, it follows that ($a_{\alpha})_{\alpha\in I} \in [\prod_{\alpha\in I}A_{\alpha}]_{G}$. By the arbitrary of $(a_{\alpha})_{\alpha\in I}$, ones have $\prod_{\alpha\in I}[A_{\alpha}]_{G}\subset[\prod_{\alpha\in I}A_{\alpha}]_{G}$.

Therefore, we obtain that [$\prod_{t\in \Gamma}A_{t}]_{G}=\prod_{t\in \Gamma}[A_{t}]_{G}$.

(2) By (1), we have [$\prod_{t\in \Gamma}A_{t}]_{G}=\prod_{t\in \Gamma}[A_{t}]_{G}$. Since $A_{t}$ is $G$-closed for each $t\in \Gamma$, then
[$A_{t}]_{G} \subset A_{t}$ for each $t\in \Gamma$. So, $\prod_{t\in \Gamma}[A_{t}]_{G}\subset\prod_{t\in \Gamma}A_{t}$.
As a result we obtain that [$\prod_{t\in \Gamma}A_{t}]_{G}\subset\prod_{t\in \Gamma}A_{t}$. Therefore, $\prod_{t\in \Gamma}A_{t}$ is $G$-closed.
\end{proof}

In \cite[Theorem 3.16]{life34}, the authors proved the following result.

\begin{proposition}\label{pro3.3}\cite[Theorem 3.16]{life34}
If $G$ be a regular method preserving the $G$-convergence of subsequences and $A$ and $B$ are subsets of $X$, then ($A\times B)_{G}=(A)_G\times(B)_{G}$.
\end{proposition}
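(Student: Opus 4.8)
The plan is to prove the two inclusions separately, working directly from the definition of the $G$-kernel (Definition~\ref{def2.2}(4)) in the product $X\times X$ equipped with the induced method $\widetilde{G}$ of Remark~\ref{rem2.3}(1), for which $\widetilde{G}(x^{n},y^{n})=(G(x^{n}),G(y^{n}))$. It is worth recording first the dual reformulation $l\in(A)_{G}$ iff $l\notin[X\setminus A]_{G}$, which is immediate from the definitions; this suggests trying to derive the result from Theorem~\ref{the2.4}, but since $(X\times X)\setminus(A\times B)=\big((X\setminus A)\times X\big)\cup\big(X\times(X\setminus B)\big)$ is a union rather than a product, the product formula for $G$-hulls does not apply verbatim, and a direct argument is cleaner.

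For the inclusion $(A)_{G}\times(B)_{G}\subseteq(A\times B)_{\widetilde{G}}$, I would take $(l_{1},l_{2})$ with $l_{1}\in(A)_{G}$ and $l_{2}\in(B)_{G}$ and argue by contradiction: suppose $(x^{n},y^{n})\in s\big((X\times X)\setminus(A\times B)\big)\cap c_{\widetilde{G}}(X\times X)$ with $\widetilde{G}(x^{n},y^{n})=(l_{1},l_{2})$. For every $n$ either $x^{n}\notin A$ or $y^{n}\notin B$, so by the pigeonhole principle at least one of the index sets $\{n:x^{n}\notin A\}$ and $\{n:y^{n}\notin B\}$ is infinite. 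In the first case the corresponding subsequence lies in $X\setminus A$ and, because $G$ preserves the $G$-convergence of subsequences, it is $G$-convergent to $l_{1}$; this exhibits an element of $s(X\setminus A)\cap c_{G}(X)$ with $G$-limit $l_{1}$, contradicting $l_{1}\in(A)_{G}$. The second case contradicts $l_{2}\in(B)_{G}$ symmetrically.

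For the reverse inclusion I would prove the contrapositive: if $(l_{1},l_{2})\notin(A)_{G}\times(B)_{G}$, say $l_{1}\notin(A)_{G}$, then there is $(x^{n})\in s(X\setminus A)\cap c_{G}(X)$ with $G(x^{n})=l_{1}$. Pairing it with the constant sequence $y^{n}=l_{2}$, which is $G$-convergent to $l_{2}$ since $G$ is regular, yields a sequence $(x^{n},y^{n})$ lying in $(X\times X)\setminus(A\times B)$ (because $x^{n}\notin A$) with $\widetilde{G}(x^{n},y^{n})=(l_{1},l_{2})$, so $(l_{1},l_{2})\notin(A\times B)_{\widetilde{G}}$; the case $l_{2}\notin(B)_{G}$ is symmetric. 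Combining the two inclusions gives $(A\times B)_{G}=(A)_{G}\times(B)_{G}$.

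The main obstacle is the subsequence step in the first inclusion: a sequence in the complement of $A\times B$ need not have either of its coordinate sequences lying entirely outside $A$ or outside $B$, and passing to the infinite subsequence that does is precisely where the hypothesis that $G$ preserves the $G$-convergence of subsequences is indispensable; without it the extracted subsequence might fail to be $G$-convergent, or converge to a different point, and the contradiction would collapse. Regularity, by contrast, is needed only in the easy reverse inclusion, to guarantee that the constant sequence $G$-converges to $l_{2}$.
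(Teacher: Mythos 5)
Your proof is correct; note, though, that the paper itself gives no proof of this proposition --- it is quoted verbatim from \cite[Theorem 3.16]{life34} --- so there is no internal argument to compare against. Your two-inclusion argument (pigeonhole plus preservation of $G$-convergence of subsequences for $(A)_G\times(B)_G\subseteq(A\times B)_{\widetilde G}$, and a constant sequence plus regularity for the contrapositive of the reverse inclusion) is exactly the natural proof of the cited theorem, and your accounting of where each hypothesis is indispensable is accurate.
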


However, we can not extent this result to the case of infinity. Indeed, the next example is known as ($\prod_{n\in \mathbb{N}}A_{n})_{G} \ne\prod_{n\in \mathbb{N}}(A_{n})_{G}$.

\begin{example}\label{exam3.4}
There exists a regular method $G$ on $X$ preserving the $G$-convergence of subsequences such that there exists a sequence $\{A_{n}: n\in\mathbb{N}\}$ of subsets of $X$ with
($\prod_{n\in \mathbb{N}}A_{n})_{G}\ne\prod_{n\in \mathbb{N}}(A_{n})_{G}$.
\end{example}

\begin{proof}
Let $G$ be  the ordinary convergence method on the usual space $\mathbb{R}$. Then $G$ is a regular method preserving the $G$-convergence of subsequences. Let $X$=$\prod$$_{n=1}^\infty$$X_{n}$ and $c_{G}(X)=c(X)$, where each $X_{n}$=$\mathbb{R}$ and $X$ is endowed with the product topology.
We pick  $A_{n}$=($n-1/4$, $n+1/4$). So, ($A_{n}$)$_{G}$=($n-1/4$, $n+1/4$) for each $n\in \mathbb{N}$, then $$\prod_{n\in \mathbb{N}}(A_{n})_{G}=\prod_{n=1}^\infty(n-1/4, n+1/4).$$ We choose a sequence {\boldmath$y$} =(n)$_{n=1}^\infty$ of the points in $\prod_{n\in \mathbb{N}}(A_{n})_{G}$. For each $n\in\mathbb{N}$, let {\boldmath$x_{n}$}=($(x_{n})_{i})_{i=1}^\infty\in X$ such that for each $i\in\mathbb{N}$ ones have $$ {\boldmath {{(x_{n})}_{i}}}=
\begin{cases}
i& \text{i$\ne$n}\\
0& \text{i=n}
\end{cases}.$$
$\newline$
Since $\lim_{n\rightarrow\infty}(x_{n})_{i}=i$, we have $\lim_{n\rightarrow\infty}$({\boldmath$x_{n}$})=$(i)_{i=1}^\infty$={\boldmath$y$}.
Moreover, since $0\not\in A_{i}$ for each $i\in \mathbb{N}$, each {\boldmath$x_{n}$} $\not\in$ $\prod_{n\in \mathbb{N}}A_{n}$. Therefore, {\boldmath$y$} $\not\in(\prod_{n\in \mathbb{N}}A_{n})_{G}$.
As a result we obtain that $$(\prod_{n\in \mathbb{N}}A_{n})_{G}\ne\prod_{n\in \mathbb{N}}(A_{n})_{G}.$$
\end{proof}

The next result was proved in \cite{life34}. Now, we extend this result to the case of infinite.

\begin{proposition}\label{pro3.5}\cite[Theorem 3.19]{life34}
$\pi_{1}: X\times X\to X, (x, y)\mapsto x$ and $\pi_{2}: X\times X\to X$, $(x, y)\mapsto y$ projection mappings are $G$-continuous morphisms of topological groups with operations.
\end{proposition}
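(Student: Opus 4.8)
The plan is to check separately the two requirements that a morphism of topological groups with operations must satisfy according to Definition~\ref{def0.0.4} — namely being a continuous group homomorphism preserving the operations of $\Omega_1'$ and $\Omega_2'$ — and then the $G$-continuity condition, treating $\pi_1$ and $\pi_2$ symmetrically since the argument for one transfers verbatim to the other.

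First I would record the structure of the product object $X\times X$ in $\mathbf{Top^{C}}$: every operation of $\Omega$ acts coordinatewise, so for $\star\in\Omega_2$ one has $(x_1,y_1)\star(x_2,y_2)=(x_1\star x_2,\,y_1\star y_2)$ and for $\omega\in\Omega_1$ one has $\omega(x,y)=(\omega(x),\omega(y))$. Applying $\pi_1$ then gives $\pi_1\big((x_1,y_1)\star(x_2,y_2)\big)=x_1\star x_2=\pi_1(x_1,y_1)\star\pi_1(x_2,y_2)$ and $\pi_1(\omega(x,y))=\omega(x)=\omega(\pi_1(x,y))$, so $\pi_1$ preserves every operation in $\Omega$, in particular the group law $+$, the inversion $-$ and the constant $0$, together with all operations in $\Omega_1'$ and $\Omega_2'$; hence it is a group homomorphism of the required kind. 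Continuity of $\pi_1$ is the standard fact that a coordinate projection out of a space carrying the product topology is continuous. The same two identities establish the claim for $\pi_2$.

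The remaining and, I expect, only delicate point is the $G$-continuity, because Definition~\ref{def5.2}(1) is phrased for self-maps of a single space, whereas $\pi_i$ runs from $(X\times X,\widetilde{G})$ to $(X,G)$; so I would first make explicit the natural reading of $G$-continuity for a map $f\colon(Y,G_Y)\to(Z,G_Z)$, namely $G_Z(f(\mathbf{y}))=f(G_Y(\mathbf{y}))$ for every $\mathbf{y}\in c_{G_Y}(Y)$. Once this is in place the verification is immediate from Remark~\ref{rem2.3}(1): given $\widetilde{\mathbf{z}}=(z^n)$ with $z^n=(x_n,y_n)$ and $\widetilde{\mathbf{z}}\in c_{\widetilde{G}}(X\times X)$, the sequences $(x_n)$ and $(y_n)$ lie in $c_G(X)$ and the definition of $\widetilde{G}$ yields $\widetilde{G}(\widetilde{\mathbf{z}})=\big(G((x_n)),\,G((y_n))\big)$, whence $\pi_1(\widetilde{G}(\widetilde{\mathbf{z}}))=G((x_n))=G(\pi_1(\widetilde{\mathbf{z}}))$, which is exactly the $G$-continuity identity for $\pi_1$; the case of $\pi_2$ is identical. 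Thus the substance of the proof is bookkeeping around the product method $\widetilde{G}$, and no appeal to regularity or to preservation of $G$-convergence of subsequences is needed at this stage.
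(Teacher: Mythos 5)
Your proposal is correct and takes essentially the same route as the paper: the paper states this proposition as a citation to \cite[Theorem 3.19]{life34} and proves its generalization in Theorem~\ref{the3.6} by exactly your coordinatewise computation $G(\pi_{i}(\mathbf{x}))=u_{i}=\pi_{i}(G(\mathbf{x}))$, resting on the product method of Remark~\ref{rem2.3}(1). Your extra care --- explicitly checking that $\pi_{1},\pi_{2}$ are continuous homomorphisms preserving the operations in $\Omega_{1}'$ and $\Omega_{2}'$, spelling out what $G$-continuity means between two different methods, and noting that neither regularity nor preservation of $G$-convergence of subsequences is used --- fills in bookkeeping the paper leaves implicit, but does not change the argument.
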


\begin{theorem}\label{the3.6}
For each $i\in \Gamma$, the mapping $\pi_{i}$: $\prod_{t\in \Gamma}X_{t}\to X_{i}$, ($x_{t})\mapsto x_{i}$ is a $G$-continuous morphism of topological groups with operations.
\end{theorem}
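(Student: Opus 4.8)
The plan is to check separately the two ingredients hidden in the phrase ``$G$-continuous morphism of topological groups with operations'': first that $\pi_i$ is $G$-continuous with respect to the product method $\widetilde{G}$ on $\widetilde{X}=\prod_{t\in\Gamma}X_t$ introduced in Remark~\ref{rem2.3}(1), and then that $\pi_i$ is a morphism in $\mathbf{Top}^{C}$, i.e. a continuous group homomorphism preserving the operations in $\Omega_1'$ and $\Omega_2'$ in the sense of Definition~\ref{def0.0.4}. This mirrors the two-factor case of Proposition~\ref{pro3.5}, the only new point being that the coordinatewise recipe defining $\widetilde{G}$ goes through verbatim for an arbitrary index set $\Gamma$.

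For $G$-continuity I would start from an arbitrary $\widetilde{\mathbf{x}}=(x^n)\in c_{\widetilde{G}}(\widetilde{X})$, written with $x^n=(x^n_t)_{t\in\Gamma}$. By the description of $c_{\widetilde{G}}(\widetilde{X})$ in Remark~\ref{rem2.3}(1), each coordinate sequence $(\pi_t(x^n))_n$ lies in $c_G(X_t)$; in particular $\pi_i(\widetilde{\mathbf{x}})=(x^n_i)_n\in c_G(X_i)$, so that $G(\pi_i(\widetilde{\mathbf{x}}))$ is legitimately defined. The definition $\widetilde{G}(\widetilde{\mathbf{x}})=(G(\pi_t(x^n)))_{t\in\Gamma}$ then yields, upon applying $\pi_i$,
\[
\pi_i(\widetilde{G}(\widetilde{\mathbf{x}}))=G(\pi_i(x^n))=G(\pi_i(\widetilde{\mathbf{x}})),
\]
which is exactly the identity $G(\pi_i(\widetilde{\mathbf{x}}))=\pi_i(\widetilde{G}(\widetilde{\mathbf{x}}))$ demanded by Definition~\ref{def5.2}(1). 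Thus $G$-continuity is immediate from the coordinatewise construction of $\widetilde{G}$.

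For the morphism part I would invoke the standard behaviour of the product in $\mathbf{Top}^{C}$: the group structure together with every operation $\omega\in\Omega_1'$ and $\star\in\Omega_2'$ on $\prod_{t\in\Gamma}X_t$ is defined coordinatewise, so $\pi_i$ automatically respects $0$, $-$, $+$ and each $\omega,\star$; moreover $\pi_i$ is continuous by the very definition of the product topology. Hence $\pi_i$ is a continuous, operation-preserving homomorphism, i.e. a morphism of topological groups with operations.

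There is little genuine difficulty here; the substance is essentially bookkeeping. The one point deserving care, and which I regard as the main ``obstacle'', is the interpretation of Definition~\ref{def5.2}(1), which is phrased literally for a self-map $X\to X$: I would read it in the natural way for $\pi_i\colon\widetilde{X}\to X_i$, with $\widetilde{G}$ on the domain and $G$ on the codomain, and be careful to record the membership condition for $c_{\widetilde{G}}(\widetilde{X})$ \emph{before} forming $G(\pi_i(\widetilde{\mathbf{x}}))$, so that the limit on the right-hand side is defined. Once this convention is fixed, both assertions follow directly.
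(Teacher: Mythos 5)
Your proposal is correct and follows essentially the same route as the paper: the paper's proof is exactly your coordinatewise computation $G(\pi_i(\widetilde{\mathbf{x}}))=\pi_i(\widetilde{G}(\widetilde{\mathbf{x}}))$ via the product method of Remark~\ref{rem2.3}(1), applied to a sequence with $\widetilde{G}$-limit $u=(u_t)$. You are in fact slightly more careful than the paper, which silently omits both the morphism verification (which is routine, as you say) and your explicit check that $\pi_i(\widetilde{\mathbf{x}})\in c_G(X_i)$ before forming the limit.
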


\begin{proof}
If {\boldmath$x$} is a sequence of the points of $\prod_{t\in \Gamma}X_{t}$ such that $G$({\boldmath$x$})=$u=(u_{t}$). Then, for each $i\in \Gamma$,
we have $G(\pi_{i}$({\boldmath$x$}))=G({\boldmath$x_{i}$})=$u_{i}=\pi_{i}(u)=\pi_{i}(G$({\boldmath $x$})).
\end{proof}

In \cite[Theorem 3.20]{life34}, the authors proved that each projection mapping is a $G$-open morphism of topological groups with operations. Indeed, each projection mapping is a $G$-closed morphism of topological groups with operations.

\begin{theorem}\label{the3.8}
Let $G$ be a regular method preserving the $G$-convergence of subsequences. Then $\pi_{1}:X\times X\to X$, ($x, y)\mapsto x$ and $\pi_{2}:X\times X\to X$, ($x, y)\mapsto y$ projection mappings are $G$-closed morphisms of topological groups with operations.
\end{theorem}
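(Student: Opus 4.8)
The plan is to show that $\pi_1$ maps $G$-closed subsets of $X \times X$ to $G$-closed subsets of $X$, and symmetrically for $\pi_2$. Fix a $G$-closed set $F \subseteq X \times X$; I want to prove $\pi_1(F)$ is $G$-closed in $X$, i.e. $[\pi_1(F)]_G \subseteq \pi_1(F)$. So I would start with an arbitrary point $l \in [\pi_1(F)]_G$ and produce a witness sequence in $\pi_1(F)$ whose $G$-limit is $l$; by definition of the $G$-hull there is a sequence $\textbf{x} = (x_n) \in s(\pi_1(F)) \cap c_G(X)$ with $G(\textbf{x}) = l$. The goal is then to lift this to a sequence in $F$ and exploit $G$-closedness of $F$ together with $G$-continuity of $\pi_1$ (Proposition~\ref{pro3.5}) to conclude $l \in \pi_1(F)$.

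Concretely, since each $x_n \in \pi_1(F)$, for every $n$ I can choose $y_n \in X$ with $(x_n, y_n) \in F$. This gives a sequence $\textbf{z} = ((x_n, y_n))$ in $s(F)$. The heart of the argument is to arrange that $\textbf{z}$ (or a subsequence of it) lies in $c_{\widetilde{G}}(X \times X)$, because then $G$-closedness of $F$ forces $\widetilde{G}(\textbf{z}) \in F$, and applying $\pi_1$ via $G$-continuity yields $\pi_1(\widetilde{G}(\textbf{z})) = G(\textbf{x}) = l \in \pi_1(F)$, as desired. By Remark~\ref{rem2.3}(1), $\textbf{z} \in c_{\widetilde{G}}(X\times X)$ precisely when both coordinate sequences $(x_n)$ and $(y_n)$ are in $c_G(X)$; the first coordinate is already $G$-convergent by hypothesis, so \textbf{the main obstacle is controlling the second coordinate} $(y_n)$, which was chosen with no convergence guarantee.

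This is exactly where the standing assumption that $G$ is regular and \emph{preserves the $G$-convergence of subsequences} should enter, together with the infrastructure already in place for the analogous $G$-open statement of \cite[Theorem 3.20]{life34}. The strategy I would pursue is to pass to a subsequence: because $G$ preserves $G$-convergence of subsequences, any subsequence of $(x_n)$ still $G$-converges to $l$, so I am free to thin $(y_n)$ to extract $G$-convergent behavior in the second coordinate while keeping the first coordinate $G$-convergent to the same limit $l$. One then sets $l' = G$ of the thinned second coordinate, obtains $(l, l') \in [F]_G = F$ from $G$-closedness of $F$ (using Remark~\ref{rem2.3}(2), valid since $G$ is regular), and reads off $l = \pi_1(l, l') \in \pi_1(F)$.

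I expect the delicate point to be justifying that a $G$-convergent refinement of $(y_n)$ actually exists, since $G$-convergence need not enjoy a Bolzano--Weierstrass–type sequential compactness in general; the likely resolution is that the subsequential and convergence-preserving hypotheses on $G$, combined with the structure of $X$ as a topological group with operations, reduce the problem to ordinary convergence along a subsequence, mirroring the mechanism used in the proof of Proposition~\ref{pro3.3}. Once the second coordinate is controlled, the remaining steps are formal: invoke $G$-closedness of $F$, then $G$-continuity of $\pi_1$. Finally, the argument for $\pi_2$ is identical with the roles of the coordinates exchanged, and the morphism-of-topological-groups claim follows from Proposition~\ref{pro3.5} since $\pi_1, \pi_2$ are already established to be such morphisms.
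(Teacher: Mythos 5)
Your proposal stalls at exactly the right place, but the gap you flag cannot be closed. Having chosen $y_n$ with $(x_n,y_n)\in F$, you need a subsequence along which $(y_n)$ is $G$-convergent (so that the lifted sequence lands in $c_{\widetilde{G}}(X\times X)$ and the $G$-closedness of $F$ can bite), and you candidly admit that no Bolzano--Weierstrass--type mechanism is available to produce it. In fact none exists, and no hypothesis in the theorem supplies a substitute: take $X=\mathbb{R}$ as an additive topological group, let $G=\lim$ be the ordinary convergence method (which is regular and preserves the $G$-convergence of subsequences), and let $F=\{(x,y)\in\mathbb{R}^2 : xy=1\}$. Then $F$ is closed in $\mathbb{R}^2$, hence $G$-closed with respect to the coordinatewise method $\widetilde{G}$ of Remark~\ref{rem2.3}(1), but $\pi_{1}(F)=\mathbb{R}\setminus\{0\}$ is not $G$-closed: the sequence $x_n=1/n$ lies in $\pi_{1}(F)$ and $G$-converges to $0\notin\pi_{1}(F)$, while the only fibre witnesses $y_n=n$ admit no $G$-convergent subsequence whatsoever. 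So the ``delicate point'' you defer is not a technical refinement waiting to be justified; it is the precise spot where the statement fails, and Theorem~\ref{the3.8} as stated is false --- the classical fact that projections need not be closed maps persists verbatim in the $G$-setting, in contrast to the $G$-open statement of \cite[Theorem 3.20]{life34}, which survives because openness of a product projection does not require controlling fibres.

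For what it is worth, the paper's own proof founders at the same step, and in a worse way than yours: instead of your carefully chosen fibre sequence $(y_n)$, it picks an arbitrary point $y\in X$, forms the constant sequence $\mathbf{y}=(y,y,y,\dots)$, and asserts that $(\mathbf{x},\mathbf{y})$ is a sequence of points of $A$ with $G(\mathbf{x},\mathbf{y})=(u,y)\in A$. This is unjustified: $x_n\in\pi_{1}(A)$ only guarantees the existence of some $y_n$ \emph{depending on} $n$ with $(x_n,y_n)\in A$, not that $(x_n,y)\in A$ for a single fixed $y$ (in the hyperbola example no constant second coordinate works for even two distinct $x_n$). Your diagnosis of where the difficulty sits is accurate and more honest than the published argument; the correct conclusion to draw from it is not that a cleverer subsequence extraction is needed, but that the projection of a $G$-closed set need not be $G$-closed, so the theorem should be withdrawn or restricted (e.g.\ to sets with $G$-compact-like fibre behaviour), and the final reduction you make to Proposition~\ref{pro3.5} for the morphism property, while correct, is moot.
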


\begin{proof}
Let $A\subset X\times X$ be a $G$-closed subset. To prove that $\pi_{1}(A$) is $G$-closed. It suffices to prove [$\pi_{1}(A)]_{G}\subset\pi_{1}(A$). Let $A$ $\subset X\times X$ be a $G$-closed subset, $u\in [\pi_{1}(A)]_{G}$ and {\boldmath$x$} a sequence of the points in $X$ such that G({\boldmath$x$})=$u$. Choose a point $y\in X$, and let {\boldmath$y$}=($y, y, y,...)$ be the constant sequence. Since $A$ is $G$-closed, then the sequence({\boldmath$x$}, {\boldmath$y$}) of the points in $A$ such that G({\boldmath$x$}, {\boldmath$y$})=($u, y)\in A$. Then $\pi_{1}$(G({\boldmath$x$}, {\boldmath$y$}))=$G$({\boldmath$x$})=$u\in\pi_{1}(A$).
Then [$\pi_{1}(A)]_{G} \subset\pi_{1}(A$). As a result we obtain that $\pi_{1}$ is $G$-closed.

Similarly one can prove that $\pi_{2}$ is  also $G$-closed.
\end{proof}

In \cite{life34}, the authors also proved the following result. Then we also have the following Theorem~\ref{the3.10}.

\begin{proposition}\label{pro3.9}\cite[Theorem 3.23]{life34}
Let G be a method preserving the $G$-convergence of subsequences. Then for the projection mapping $\pi_{1}: X\times X\to X$, ($x, y)\mapsto x$ if $A\subset X$ is a $G$-open subset, then $\pi_{1}^{-1}(A$) is a $G$-open subset in $X\times X$.
\end{proposition}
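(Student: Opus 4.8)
The plan is to establish that $\pi_{1}^{-1}(A)$ is $G$-open by showing that its complement in $X\times X$ is $G$-closed, since by Definition~\ref{def2.1} a subset is $G$-open precisely when its complement is $G$-closed. The first step is the purely set-theoretic observation that $\pi_{1}^{-1}(A)=A\times X$, so that $(X\times X)\setminus\pi_{1}^{-1}(A)=(X\setminus A)\times X$. Because $A$ is $G$-open, the set $X\setminus A$ is $G$-closed by definition, and $X$ itself is $G$-closed by Remark~\ref{rem2.3}(4). Thus the problem reduces to showing that the product of the two $G$-closed sets $X\setminus A$ and $X$ is again $G$-closed.

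At this point there are two equally short routes. The most economical is to quote Theorem~\ref{the2.4}(2), which asserts exactly that a product of $G$-closed sets is $G$-closed; applied to $(X\setminus A)\times X$ this completes the proof immediately. If one prefers a self-contained verification, I would take a sequence $\widetilde{\mathbf{x}}=(x^{n})$ with $x^{n}=(x^{n}_{1},x^{n}_{2})\in(X\setminus A)\times X$ that is $\widetilde{G}$-convergent to some $(u_{1},u_{2})$. By the description of the product method $\widetilde{G}$ in Remark~\ref{rem2.3}(1), $\widetilde{G}$-convergence of $\widetilde{\mathbf{x}}$ forces the first-coordinate sequence $(x^{n}_{1})$ to be $G$-convergent with $G$-limit $u_{1}$. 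Since each $x^{n}_{1}$ lies in the $G$-closed set $X\setminus A$, we obtain $u_{1}\in X\setminus A$, whence $(u_{1},u_{2})\in(X\setminus A)\times X$; this is exactly the assertion that $(X\setminus A)\times X$ is $G$-closed, and therefore $\pi_{1}^{-1}(A)=A\times X$ is $G$-open.

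The only step requiring genuine attention is the passage from $\widetilde{G}$-convergence of $\widetilde{\mathbf{x}}$ to $G$-convergence of its first coordinate sequence, and this is built directly into the definition of $c_{\widetilde{G}}(X\times X)$ recalled in Remark~\ref{rem2.3}(1), where a sequence is declared $\widetilde{G}$-convergent only when each of its coordinate sequences is $G$-convergent. Because this implication is automatic from the product-method definition, the hypothesis that $G$ preserve the $G$-convergence of subsequences is not actually called upon by the argument above; it is retained for uniformity with the companion statements on projections in Theorems~\ref{the3.6} and~\ref{the3.8}. One could equally phrase the proof through the $G$-kernel characterization $A\subset(A)_{G}$ of $G$-openness in Remark~\ref{rem2.3}(3), arriving at the same conclusion. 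I therefore expect no real obstacle: the whole proof hinges on correctly identifying $\pi_{1}^{-1}(A)$ with $A\times X$ and transporting $G$-closedness across the product, both of which are routine given Theorem~\ref{the2.4}.
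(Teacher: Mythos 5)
Your proposal is correct, but note that the paper contains no proof of Proposition~\ref{pro3.9} to compare against: the statement is recalled from \cite[Theorem 3.23]{life34}, and the paper only proves the $G$-closed companion statement, Theorem~\ref{the3.10}. Measured against that companion proof, your route is genuinely different and in fact tighter. The paper's argument for Theorem~\ref{the3.10} takes a point $(u,v)\in[\pi_{1}^{-1}(A)]_{G}$ with a witnessing sequence but, as written, never uses that the sequence lies in $\pi_{1}^{-1}(A)$, and it manipulates ill-formed expressions such as $G(\pi_{1}^{-1}(\mathbf{x}))$; your complement argument avoids all of this. Identifying $\pi_{1}^{-1}(A)=A\times X$, so that its complement in $X\times X$ is $(X\setminus A)\times X$, and then either citing Theorem~\ref{the2.4}(2) or verifying coordinatewise through the product method of Remark~\ref{rem2.3}(1) is airtight: by construction a sequence in $X\times X$ is $\widetilde{G}$-convergent exactly when both coordinate sequences are $G$-convergent, so the first-coordinate sequence lies in $s(X\setminus A)\cap c_{G}(X)$ and its $G$-limit stays in the $G$-closed set $X\setminus A$, while the second coordinate's limit is trivially in $X$. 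Your side remark is also right and worth keeping: the hypothesis that $G$ preserves the $G$-convergence of subsequences is never invoked, and the structural reason is that the complement of a projection preimage is again a full product, so no passage to subsequences is required; the hypothesis becomes essential only in results such as Proposition~\ref{pro3.3}, where the complement of $A\times B$ is the union $\bigl((X\setminus A)\times X\bigr)\cup\bigl(X\times(X\setminus B)\bigr)$ rather than a product, and one must split a sequence into subsequences whose $G$-limits still need to be controlled.
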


\begin{theorem}\label{the3.10}
Let $G$ be a method preserving the $G$-convergence of subsequences. Then for the projection mapping $\pi_{1}: X\times X\to X$, ($x, y)\mapsto x$  if $A$ $\subset X$ is a $G$-closed subset, then $\pi_{1}^{-1}(A$) is a $G$-closed subset in $X\times X$.
\end{theorem}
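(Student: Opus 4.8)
The plan is to show directly that the $G$-hull of $\pi_1^{-1}(A)$ is contained in $\pi_1^{-1}(A)$, which by Remark~\ref{rem2.3}(2) suffices for $G$-closedness. So I would take a point $(u,v) \in [\pi_1^{-1}(A)]_G$ and aim to prove $(u,v) \in \pi_1^{-1}(A)$, i.e. $u \in A$. By definition of the $G$-hull there is a sequence $\widetilde{\boldmath{$x$}} = ((a_n, b_n))$ of points in $\pi_1^{-1}(A)$ with $\widetilde{G}(\widetilde{\boldmath{$x$}}) = (u,v)$. Here each $a_n \in A$ (since $(a_n,b_n)\in \pi_1^{-1}(A)$ forces $\pi_1(a_n,b_n) = a_n \in A$), and the product-method description in Remark~\ref{rem2.3}(1) tells me that $\widetilde{G}(\widetilde{\boldmath{$x$}}) = (G(\boldmath{$a$}), G(\boldmath{$b$}))$ where $\boldmath{$a$} = (a_n)$ and $\boldmath{$b$} = (b_n)$ are the coordinate sequences. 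Hence $G(\boldmath{$a$}) = u$.

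The key observation is then immediate: $\boldmath{$a$}$ is a sequence lying entirely in $A$ that is $G$-convergent to $u$, so $u \in [A]_G$. Since $A$ is $G$-closed, Remark~\ref{rem2.3}(2) gives $[A]_G \subset A$, whence $u \in A$ and therefore $(u,v) \in \pi_1^{-1}(A)$. Running over all points of $[\pi_1^{-1}(A)]_G$ shows $[\pi_1^{-1}(A)]_G \subset \pi_1^{-1}(A)$, which completes the argument.

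I expect the only subtle point to be justifying that the coordinate sequence $\boldmath{$a$}$ is genuinely $G$-convergent with $G(\boldmath{$a$}) = u$; this is exactly what the hypothesis on $\widetilde{G}$ provides, since $\widetilde{\boldmath{$x$}} \in c_{\widetilde{G}}(X\times X)$ means precisely that each projected sequence $(\pi_\alpha(x^n))$ lies in $c_G(X)$ and $\widetilde{G}$ acts coordinatewise. In particular I should note that the hypothesis that $G$ preserves the $G$-convergence of subsequences does not seem strictly necessary for the containment argument above (it is the dual open-set statement in Proposition~\ref{pro3.9} that genuinely uses it), but I will keep it to mirror the statement and because it guarantees the product method $\widetilde{G}$ is well-behaved on $X \times X$. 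The remaining steps are routine bookkeeping with the definitions, and the symmetric claim for $\pi_2^{-1}(A)$ follows by the identical argument with the roles of the two coordinates exchanged.
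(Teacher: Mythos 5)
Your proof is correct and takes essentially the same route as the paper's: both reduce the claim to the containment $[\pi_{1}^{-1}(A)]_{G}\subset \pi_{1}^{-1}(A)$, pick a sequence in $\pi_{1}^{-1}(A)$ that is $\widetilde{G}$-convergent to $(u,v)$, note that its first coordinate sequence lies in $A$ and $G$-converges to $u$, and conclude $u\in A$ from the $G$-closedness of $A$; if anything, your version is cleaner, since the paper's write-up garbles the final step by treating $\pi_{1}^{-1}$ as a point map (writing $\pi_{1}^{-1}(u)=G(\pi_{1}^{-1}(\mbox{\boldmath$x$}))$, which is set-valued nonsense, though harmless since $u\in A$ already gives the conclusion). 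Your side observation is also accurate: the hypothesis that $G$ preserves the $G$-convergence of subsequences is never used in this containment argument (nor, in fact, in the paper's own proof), in contrast with the $G$-open statement of Proposition~\ref{pro3.9}, where it is genuinely needed.
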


\begin{proof}
Let $A\subset X$ be a $G$-closed subset. To prove that $\pi_{1}^{-1}(A$) is $G$-closed, it suffices to prove [$\pi_{1}^{-1}(A)]_{G} \subset \pi_{1}^{-1}(A$). Let ($u, v)\in [\pi_{1}^{-1}(A)]_{G}$ and ({\boldmath$x$}, {\boldmath$y$}) a sequence in $X\times X$ such that $G$({\boldmath$x$}, {\boldmath$y$})=($u, v$). Since $A$  is $G$-closed, G({\boldmath$x$})=$u \in A$. Then $\pi_{1}^{-1}(u) \in \pi_{1}^{-1}(A$), hence $\pi_{1}^{-1}(u)=\pi_{1}^{-1}(G$({\boldmath$x$}))=$G(\pi_{1}^{-1}$({\boldmath$x$}))=$G$({\boldmath$x$}, {\boldmath$y$})=($u, v$).
Therefore, ($u, v)\in\pi_{1}^{-1}(A$). As a result we obtain that $\pi_{1}^{-1}(A$) is a $G$-closed.
\end{proof}

\smallskip
\section{$G$-connected Topological Groups with Operations}
In this section, we prove that the $G$-connectedness is preserved by countably infinite product, which extend the following Theorem~\ref{t555}. First, we recall some results.

\begin{theorem}\cite[Theorem 4.8]{life34}\label{t555}
If $X$ is $G$-connected, then $X\times X$ is still $G$-connected.
\end{theorem}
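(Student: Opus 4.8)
The plan is to mimic the classical proof that a product of connected spaces is connected, replacing ordinary connectedness by $G$-connectedness throughout. The two structural facts I would establish first are a gluing lemma and the $G$-connectedness of the coordinate slices. For the gluing lemma I would prove: if $C$ and $D$ are $G$-connected subsets of $X\times X$ with $C\cap D\neq\emptyset$, then $C\cup D$ is $G$-connected, and more generally that any union of $G$-connected subsets sharing a common point is $G$-connected. The argument is the standard one adapted to Definition~\ref{def4.1} and Definition~\ref{def2.2}(7): if $C\cup D=F\cup K$ with $F,K$ disjoint, non-empty and $G$-closed in $C\cup D$, write $F=F'\cap(C\cup D)$ and $K=K'\cap(C\cup D)$ with $F',K'$ $G$-closed in $X\times X$; then $F\cap C=F'\cap C$ and $K\cap C=K'\cap C$ are disjoint $G$-closed subsets of $C$ whose union is $C$, so $G$-connectedness of $C$ forces $C$ to lie entirely in whichever of $F,K$ contains the common point, and likewise for $D$. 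Hence one of $F,K$ is empty, a contradiction.

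Second, I would show each slice is $G$-connected. Fix $x_{0}\in X$ and consider $\phi_{x_{0}}\colon X\to\{x_{0}\}\times X$, $y\mapsto(x_{0},y)$. By Remark~\ref{rem2.3}(1) together with regularity of $G$ (the constant sequence $(x_{0})$ is $G$-convergent to $x_{0}$), a sequence $(x_{0},y^{n})$ in the slice is $\widetilde{G}$-convergent exactly when $(y^{n})$ is $G$-convergent, with $\widetilde{G}(x_{0},y^{n})=(x_{0},G(\mathbf{y}))$. Thus $\phi_{x_{0}}$ is a bijection carrying $G$-convergent sequences to $\widetilde{G}$-convergent ones and back, so it carries $G$-closed sets to $G$-closed sets of the slice and back; consequently a partition of $\{x_{0}\}\times X$ into two non-empty disjoint $G$-closed pieces would pull back to such a partition of $X$, which is impossible since $X$ is $G$-connected. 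The same argument shows every horizontal slice $X\times\{b\}$ is $G$-connected.

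With these in hand I would finish as follows. Fix $b\in X$. For each $x\in X$ put $T_{x}=(\{x\}\times X)\cup(X\times\{b\})$. This is the union of two $G$-connected slices meeting at the point $(x,b)$, so by the gluing lemma $T_{x}$ is $G$-connected. Every $T_{x}$ contains the horizontal slice $X\times\{b\}$, hence all of them share the point $(b,b)$; applying the common-point version of the gluing lemma to the family $\{T_{x}:x\in X\}$ shows $\bigcup_{x\in X}T_{x}$ is $G$-connected. Since $(x,y)\in\{x\}\times X\subseteq T_{x}$ for every $(x,y)$, this union is exactly $X\times X$, which is therefore $G$-connected.

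The main obstacle I anticipate is not the combinatorial skeleton, which is classical, but the two places where the relative notion of $G$-closedness (Definition~\ref{def2.2}(7)) interacts with the product method $\widetilde{G}$: verifying that restricting a $G$-closed subset of $X\times X$ to a slice or to $C$ really yields a $G$-closed subset there, and confirming that $\phi_{x_{0}}$ genuinely respects $\widetilde{G}$ in both directions. Both hinge on the explicit coordinatewise description of $\widetilde{G}$ in Remark~\ref{rem2.3}(1) together with regularity of $G$, and once these are pinned down the remainder is routine.
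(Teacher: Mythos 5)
Your proposal is correct and follows essentially the same route as the paper, which establishes the (more general) Proposition~\ref{pro4.4} covering this theorem by decomposing the product into a fixed slice together with the family of transverse slices $\bigl(\{x\}\times X\bigr)\cup\bigl(X\times\{b\}\bigr)$ and applying the common-point union lemma. The only difference is one of packaging: you prove the gluing lemma and the $G$-connectedness of slices from first principles (via the coordinatewise description of $\widetilde{G}$ and regularity), whereas the paper cites these facts as Propositions~\ref{pro4.2} and~\ref{pro4.3} from \cite{life24}, obtaining slice connectedness as a $G$-continuous image rather than by your sequential transfer argument.
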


First, we recall the following two results in \cite{life24}.

\begin{proposition}\label{pro4.2}\cite[Theorem 3]{life24}
Let $\{A_{i}: i\in \Gamma\}$ be a family of $G$-connected subsets of $X$. If $\bigcap_{i\in \Gamma}A_{i}$ is non-empty, then $\bigcup_{i\in \Gamma}A_{i}$ is $G$-connected.
\end{proposition}

\begin{proposition}\label{pro4.3}\cite[Theorem 1]{life24}
A $G$-continuous image of any $G$-connected subset of $X$ is $G$-connected.
\end{proposition}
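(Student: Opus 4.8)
The plan is to argue by contradiction, transporting a hypothetical disconnection of the image back to a disconnection of $A$ through the map. Suppose $f:X\to X$ is $G$-continuous and $A\subset X$ is $G$-connected, but that $f(A)$ fails to be $G$-connected. Then by Definition~\ref{def4.1} there are non-empty disjoint subsets $F,K$ that are $G$-closed in $f(A)$ with $f(A)=F\cup K$. By Definition~\ref{def2.2}(7) I may fix $G$-closed subsets $\widetilde F,\widetilde K$ of $X$ with $F=\widetilde F\cap f(A)$ and $K=\widetilde K\cap f(A)$. I would then set $A_F=A\cap f^{-1}(\widetilde F)$ and $A_K=A\cap f^{-1}(\widetilde K)$ and aim to show that these two sets witness a disconnection of $A$.

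The key step is a lemma stating that the $f$-preimage of a $G$-closed set is $G$-closed whenever $f$ is $G$-continuous. To prove it, take any $\mathbf{x}\in s(f^{-1}(\widetilde F))\cap c_G(X)$ and put $l=G(\mathbf{x})$; I must show $l\in f^{-1}(\widetilde F)$. Since $f$ is $G$-continuous, the image sequence $f(\mathbf{x})$ again lies in $c_G(X)$ and satisfies $G(f(\mathbf{x}))=f(G(\mathbf{x}))=f(l)$. As every term of $f(\mathbf{x})$ lies in $\widetilde F$ and $\widetilde F$ is $G$-closed, we obtain $f(l)=G(f(\mathbf{x}))\in\widetilde F$, i.e.\ $l\in f^{-1}(\widetilde F)$. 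Hence $f^{-1}(\widetilde F)$, and likewise $f^{-1}(\widetilde K)$, is $G$-closed in $X$.

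With the lemma in hand the contradiction assembles routinely. Because $f^{-1}(\widetilde F)$ and $f^{-1}(\widetilde K)$ are $G$-closed in $X$, the sets $A_F$ and $A_K$ are $G$-closed in $A$ by Definition~\ref{def2.2}(7). Using $F,K\subset f(A)$ one checks that $A_F=\{a\in A: f(a)\in F\}$ and $A_K=\{a\in A: f(a)\in K\}$; these are non-empty since $F,K$ are non-empty and contained in $f(A)$, are disjoint since $F\cap K=\emptyset$, and satisfy $A_F\cup A_K=A$ since $F\cup K=f(A)$. This exhibits $A$ as the union of two non-empty disjoint $G$-closed subsets, contradicting the $G$-connectedness of $A$. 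Therefore $f(A)$ is $G$-connected.

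I expect the only genuinely delicate point to be the lemma, and within it the implicit requirement, built into Definition~\ref{def5.2}(1), that a $G$-continuous map carries $G$-convergent sequences to $G$-convergent sequences, so that $G(f(\mathbf{x}))$ is defined and the identity $G(f(\mathbf{x}))=f(G(\mathbf{x}))$ may be invoked; without this one cannot conclude that $f(\mathbf{x})\in c_G(X)$. The remaining set-theoretic bookkeeping about $A_F$ and $A_K$ is straightforward.
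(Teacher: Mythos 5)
Your proof is correct, but note that there is no in-paper proof to compare it against: the paper states Proposition~\ref{pro4.3} as a recalled result, cited to \cite[Theorem 1]{life24}, and gives no argument of its own. Your route --- pulling a hypothetical disconnection of $f(A)$ back through $f$, with the key lemma that $f^{-1}$ of a $G$-closed set is $G$-closed whenever $f$ is $G$-continuous --- is the standard one and is essentially the argument in \c{C}akalli's original paper. The set-theoretic bookkeeping is handled correctly: since $F=\widetilde F\cap f(A)$ and every $a\in A$ has $f(a)\in f(A)$, your identification $A_F=A\cap f^{-1}(\widetilde F)=\{a\in A: f(a)\in F\}$ is exact, which is what makes $A_F$, $A_K$ non-empty, disjoint, and covering, and the relative $G$-closedness of $A_F$, $A_K$ in $A$ follows from Definition~\ref{def2.2}(7) precisely as you say. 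You also correctly isolate the one genuinely delicate point: Definition~\ref{def5.2}(1) must be read as implicitly asserting $f(\mathbf{x})\in c_G(X)$ for $\mathbf{x}\in c_G(X)$ --- as Definition~\ref{def6.4} makes explicit --- since otherwise $G(f(\mathbf{x}))$ is undefined and the lemma collapses; with that reading, the lemma needs no regularity or subsequence-preservation hypothesis on $G$, so your proof is valid for an arbitrary method, matching the generality in which the paper invokes the proposition.
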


In order to prove main result, we need the following result, which is also an improvement of Theorem~\ref{t555}.

\begin{proposition}\label{pro4.4}
If $X$ and $Y$ are $G$-connected,  then $X\times Y$ is $G$-connected.
\end{proposition}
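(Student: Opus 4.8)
The plan is to run the classical ``cross'' argument for connectedness of products, using the two recalled results as the engine: Proposition~\ref{pro4.3} to produce $G$-connected slices and Proposition~\ref{pro4.2} to glue them along common points. Throughout, $X\times Y$ carries the product method $\widetilde{G}$ described in Remark~\ref{rem2.3}(1), and I would use that $G$ is regular, so that every constant sequence $G$-converges to its value.

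First I would fix a base point $(x_{0},y_{0})\in X\times Y$ and establish that the slices are $G$-connected. For a fixed $y\in Y$ the section $j_{y}\colon X\to X\times Y$, $x\mapsto(x,y)$, is $G$-continuous: if $\mathbf{x}=(x_{n})$ is $G$-convergent then $\widetilde{G}(j_{y}(\mathbf{x}))=(G(\mathbf{x}),G(y,y,\dots))=(G(\mathbf{x}),y)=j_{y}(G(\mathbf{x}))$, where the middle equality is regularity applied to the constant sequence. Hence, by Proposition~\ref{pro4.3}, the horizontal slice $X\times\{y\}=j_{y}(X)$ is $G$-connected whenever $X$ is. Symmetrically, for fixed $x\in X$ the section $y\mapsto(x,y)$ is $G$-continuous, so the vertical slice $\{x\}\times Y$ is $G$-connected whenever $Y$ is.

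Next I would assemble the crosses. For each $x\in X$ put $T_{x}=(X\times\{y_{0}\})\cup(\{x\}\times Y)$. The two pieces are $G$-connected by the previous step and they share the point $(x,y_{0})$, so their intersection is non-empty and Proposition~\ref{pro4.2} shows that $T_{x}$ is $G$-connected. Finally $X\times Y=\bigcup_{x\in X}T_{x}$, and every $T_{x}$ contains the fixed horizontal slice $X\times\{y_{0}\}$; in particular $\bigcap_{x\in X}T_{x}\supseteq X\times\{y_{0}\}\ni(x_{0},y_{0})$ is non-empty. A second application of Proposition~\ref{pro4.2} then yields that $X\times Y$ is $G$-connected, as desired.

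The step I expect to be the main obstacle is the $G$-continuity of the section maps, since Definition~\ref{def5.2}(1) is literally phrased for self-maps $X\to X$, whereas $j_{y}$ goes into the different space $X\times Y$ equipped with $\widetilde{G}$. I would resolve this by reading Proposition~\ref{pro4.3} in its evident cross-space form, namely for a map $f$ satisfying $\widetilde{G}(f(\mathbf{x}))=f(G(\mathbf{x}))$, which is exactly what the computation above verifies; the crucial input is the regularity of $G$, which forces the constant coordinate to be preserved by $\widetilde{G}$. Everything else is a routine application of the gluing result (Proposition~\ref{pro4.2}), so no delicate estimates are needed.
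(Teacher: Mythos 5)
Your proof is correct and follows essentially the same route as the paper's: both build $G$-connected slices as images of $G$-continuous section maps via Proposition~\ref{pro4.3}, glue each cross at a common point via Proposition~\ref{pro4.2}, and then glue all crosses along one fixed slice (you fix the horizontal slice $X\times\{y_{0}\}$ and vary vertical ones, the paper fixes $\{a\}\times Y$ and varies horizontal ones, a purely symmetric relabeling). Your explicit verification that $\widetilde{G}(j_{y}(\mathbf{x}))=j_{y}(G(\mathbf{x}))$ via regularity on the constant coordinate is in fact more careful than the paper, which merely asserts the $G$-continuity of its (slightly garbled) section map $f_{a}$.
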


\begin{proof}
Fix $a\in X$. If $X$ is $G$-connected, then $A=\{a\}\times Y$ is $G$-connected as the image of a
$G$-connected set under $G$-continuous map $f_{a} : X\longrightarrow X\times Y$; $x\mapsto(a, y)$. Similarly for each $y\in Y$ the subset
$B_{y}= X\times\{y\}$ is $G$-connected; and $A\cap B_{y}$ has a common point $(a, y)$, hence $A\cup B_{y}$ is $G$-connected by Proposition~\ref{pro4.2}. Since $X\times Y=\bigcup_{y\in Y}(A\cup B_{y})$. Thus it follows from Proposition~\ref{pro4.2} that $X\times Y$ is $G$-connected.
\end{proof}

Now, we can prove our main result.

\begin{theorem}\label{the4.6}
Let \{{$X$$_{n}$}\} $_{ n\in \mathbb{N}}$  be a sequence of topological spaces. Then the product space $X=\prod_{n\in \mathbb{N}}X_{n}$ is $G$-connected if and only if $X_{n}$ is $G$-connected for each $n\in \mathbb{N}$.
\end{theorem}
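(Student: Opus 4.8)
The plan is to prove the two implications separately, putting essentially all of the work into the sufficiency direction. Write $\widetilde{G}$ for the product method on $X=\prod_{n\in\mathbb{N}}X_{n}$ from Remark~\ref{rem2.3}(1); it is again regular, since a usual-convergent sequence in the product topology converges coordinatewise. Necessity is then immediate: if $X$ is $\widetilde{G}$-connected it is nonempty, so each $X_{n}$ is nonempty and each projection $\pi_{n}\colon X\to X_{n}$ is a surjective $G$-continuous map by Theorem~\ref{the3.6}; hence $X_{n}=\pi_{n}(X)$ is a $G$-continuous image of the $\widetilde{G}$-connected space $X$ and so is $G$-connected by Proposition~\ref{pro4.3}.

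For sufficiency I would first isolate the tool that a $\widetilde{G}$-dense, $\widetilde{G}$-connected subset forces the whole space to be $\widetilde{G}$-connected. Suppose $Y\subseteq X$ is $\widetilde{G}$-connected with $\overline{Y}^{\widetilde{G}}=X$, and suppose toward a contradiction that $X=F\cup K$ with $F,K$ nonempty, disjoint, and $\widetilde{G}$-closed. Then $Y\cap F$ and $Y\cap K$ are disjoint $\widetilde{G}$-closed subsets of $Y$ covering $Y$ (Definition~\ref{def2.2}(7)), so $\widetilde{G}$-connectedness forces one of them, say $Y\cap K$, to be empty, i.e. $Y\subseteq F$. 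As $F$ is a $\widetilde{G}$-closed set containing $Y$, the definition of $G$-closure as an intersection gives $X=\overline{Y}^{\widetilde{G}}\subseteq F$, contradicting $K\neq\emptyset$. I route the argument through this lemma, rather than through ``the $G$-closure of a $G$-connected set is $G$-connected'', precisely because $G$-closure need not be idempotent (Remark~\ref{rem2.3}(4)); the lemma uses only that $\overline{Y}^{\widetilde{G}}$ lies inside every $\widetilde{G}$-closed superset of $Y$, which is immediate from the definition.

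It remains to produce such a $Y$. Fix $a=(a_{n})\in X$ and set $Y_{k}=\{x\in X: x_{n}=a_{n}\text{ for all }n>k\}$. The map $j_{k}\colon\prod_{n=1}^{k}X_{n}\to X$ sending $(x_{1},\dots,x_{k})$ to $(x_{1},\dots,x_{k},a_{k+1},a_{k+2},\dots)$ is $G$-continuous, checked coordinatewise using regularity of $G$ on the constant tail, and has image exactly $Y_{k}$. The finite product $\prod_{n=1}^{k}X_{n}$ is $G$-connected by induction on $k$, the inductive step being Proposition~\ref{pro4.4} applied to $(\prod_{n=1}^{k}X_{n})\times X_{k+1}$, so each $Y_{k}$ is $G$-connected by Proposition~\ref{pro4.3}. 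Since every $Y_{k}$ contains $a$, Proposition~\ref{pro4.2} makes $Y=\bigcup_{k}Y_{k}$ $\widetilde{G}$-connected. For density, given $x=(x_{n})\in X$ the points $w^{m}=(x_{1},\dots,x_{m},a_{m+1},\dots)\in Y$ form a sequence that is eventually constant in each fixed coordinate, so $\widetilde{G}(w^{m})=x$ by regularity; hence $x$ lies in every $\widetilde{G}$-closed set containing $Y$, giving $\overline{Y}^{\widetilde{G}}=X$. Applying the lemma then finishes the proof.

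The main obstacle I anticipate is the bookkeeping that legitimizes this reduction: verifying that $j_{k}$ is genuinely $G$-continuous, that the product method $\widetilde{G}$ restricted to the slice $Y_{k}$ coincides with the product method on $\prod_{n=1}^{k}X_{n}$ so that Propositions~\ref{pro4.4} and~\ref{pro4.3} really apply, and that density is handled with $\widetilde{G}$-closure rather than topological closure throughout, given that $G$-closure is not idempotent.
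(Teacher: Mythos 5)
Your proposal is correct and takes essentially the same route as the paper: necessity via the $G$-continuous projections (Theorem~\ref{the3.6}) and Proposition~\ref{pro4.3}, and sufficiency by building the $G$-connected, $G$-dense set of points agreeing with $a$ in all but finitely many coordinates --- your $Y=\bigcup_{k}Y_{k}$ coincides with the paper's $C=\bigcup_{S\in\mathcal{P}}C_{S}$ (which the paper misprints as an intersection), and your truncated sequences $w^{m}$ are exactly the paper's Step~3 witnesses for density. The only deviation is that where the paper closes by citing \cite[Corollary 2]{life24}, you prove that lemma (a $G$-dense $G$-connected subset forces $G$-connectedness of the whole space) directly from Definitions~\ref{def2.2} and~\ref{def4.1}, which makes the argument self-contained and, as you note, correctly avoids any appeal to idempotency of the $G$-closure.
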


\begin{proof}
 Necessity. Let the product space $X=\prod_{n\in \mathbb{N}}X_{n}$ be $G$-connected. It follows from Theorem~\ref{the3.6} that $\pi_{i}: \prod_{n\in \mathbb{N}}X_{n}\to X_{i}$ is $G$-continuous morphism of topological groups with operations for each $i\in \mathbb{N}$. By Proposition~\ref{pro4.3}, each $X_{n}$ is $G$-connected.

Sufficiency. Suppose that $X_{n}$ is $G$-connected for each $n\in \mathbb{N}$. Fix an arbitrary point $a=(a_{n})_{n\in \mathbb{N}}\in\prod_{n\in \mathbb{N}}X_{n}$. We divide the proof into the following three steps.

\smallskip
{\bf Step 1:} We show that  for each $i\in \mathbb{N}$, the set $B_{i}=\prod_{n\in \mathbb{N}}A_{n}$ is $G$-connected, where $A_{n}=X_{n}$ if $n=i$ and $A_{n}=\{a_{n}\}$ if $n\ne i$.

\smallskip
Indeed, for each $i\in\mathbb{N}$, define the mapping $f_{i}: X_{i}\to B_{i}$ with $f(x)=(x_{n})_{n\in \mathbb{N}}$ for each $x\in X_{i}$, where for each $n\in\mathbb{N}$ ones have $$x_{n}=\begin{cases}
a_{n}& \text{n$\ne$i}\\
x& \text{n=i}
\end{cases}.$$

Since each $X_{i}$ is $G$-connected, it follows from Proposition~\ref{pro4.3} that it suffices to prove that each $f_{i}$ is $G$-continuous. Hence, it suffices to prove that $f_{i}(G$({\boldmath$x$}))=$G(f_{i}$({\boldmath$x$})) for each sequence {\boldmath$x$}=($x_{n})_{n\in \mathbb{N}}\subset X_{i}$ with G({\boldmath$x$})=$u\in X_{i}$.
Obviously, $f_{i}$(G({\boldmath$x$}))=$f_{i}(u)=(u_{n})_{n\in \mathbb{N}}$, where for each $n\in\mathbb{N}$ ones have $$u_{n}=\begin{cases}
a_{n}& \text{n$\ne$i}\\
u& \text{n=i}
\end{cases}.$$

On the other hand, G($f_{i}$({\boldmath$x$}))=G($f_{i}$($x$$_{n}$)$_{n\in \mathbb{N}}$). Let {\boldmath$y$}=($f_{i}(x_{n}))_{n\in \mathbb{N}}=({y}_{n})_{n\in \mathbb{N}}$, where, fore each $n\in\mathbb{N}$, $y_{n}=(y_{n}^{k})_{k\in\mathbb{N}}$ such that
$$y_{n}^{k}=\begin{cases}
a_{n}& \text{k$\ne$i}\\
x_{n}& \text{k=i}
\end{cases}.$$
Then G({\boldmath$y$})=G($f_{i}$({\boldmath$x$}))=($u_{n})_{n\in \mathbb{N}}$, where, for each $n\in\mathbb{N}$, $$u_{n}=\begin{cases}
a_{n}& \text{n$\ne$i}\\
u& \text{n=i}
\end{cases}.$$ Therefore, $f_{i}$(G({\boldmath$x$}))=G($f_{i}$({\boldmath$x$})). As a result we obtain that $f_{i}: X_{i}\to B_{i}$ is $G$-continuous. Therefore, each $B_{i}\subset\prod_{n\in \mathbb{N}}X_{n}$ is a $G$-connected subset.

\smallskip
{\bf Step 2:} Let $\mathcal{P}$ be the set of all finite subsets of $\mathbb{N}$. For each $S\in\mathcal{P}$, let $A_{n}=X_{n}$ for $n\in S$, and let $A_{n}=\{a_{n}\}$ for $n\in\mathbb{N}\setminus S$. Let $C_{S}=\prod_{n\in \mathbb{N}}A_{n}$ for each $S\in \mathcal{P}$, and let $C=\bigcap_{S\in \mathcal{P}}C_{S}$. We claim that $C$ is $G$-connected.

\smallskip
Indeed, since $a\in\bigcap_{S\in \mathcal{P}}C_{S}$, it follows from Proposition \ref{pro4.2} that it suffices to prove each $C_{S}$ is $G$-connected.
We define a mapping $p_{S}: Y=\prod_{n\in S}X_{n}\to C_{S}$ with $p_{S}(y)=({x}_{n})_{n\in \mathbb{N}}$ for each $y=(y_{n})_{n\in S}$, where, for each $n\in\mathbb{N}$, $$x_{n}=
\begin{cases}
a_{n}& \text{$n\in \mathbb{N}-S$}\\
y_{n}& \text{$n\in S$}
\end{cases}.$$ Similar to the proof of Step 1, it can prove that $p_{S}$ is a G-continuous mapping. Since each $X_{n}$ is a $G$-connected subset, $\prod$$_{n\in S}$$X_{n}$ is $G$-connected by Proposition \ref{pro4.4}. Therefore each $C$$_{S}$ is $G$-connected by the $G$-continuity of $p_{S}$.

\smallskip
{\bf Step 3:} It has $\overline{C}^{G}$=$X$.

\smallskip
Obviously, we have $\overline{C}^{G}\subset X$. Therefore, it suffices to prove that $X\subset\overline{C}^{G}$. Take an arbitrary $x\in X$. Let $x=(x_{n})_{n\in\mathbb{N}}$. and for each $i\in \mathbb{N}$. Then it is enough to find a sequence {\boldmath$y$} of the points in $C$ such that G({\boldmath$y$})=$x$. Indeed, let {\boldmath$y$}=(($y_{i}^{n}))_{n\in\mathbb{N}}$, where, for each $n, i\in \mathbb{N}$, $$y_{i}^{n}=\begin{cases}
x_{i}& \text{i $\le$ n}\\
a_{i}& \text{i $>$ n}
\end{cases}.$$
Then each $y_{i}^{n}\in C$, and hence {\boldmath$y$} is a sequence in $C$. Obviously, we have $G$({\boldmath$y$})=$x$, hence $x\in\overline{C}^{G}$. By the arbitrary choice of $x$, we have $X\subset\overline{C}^{G}$.

Therefore, it follows from \cite[Corollary 2]{life24} that $X$ is $G$-connected.
\end{proof}

\smallskip
\section{some properties of $G$-topological groups}
\smallskip
In this section, {\bf we assume that $G$ is a regular method preserving the subsequence.} We define the concept of $G$-topological group and obtain some properties of it. First, we recall some concepts and a fact.

\begin{definition}\label{def6.4}\cite[Definition 7.1]{life31}
Let $G_{1}, G_{2}$ be methods on sets $X$ and $Y$, respectively. A mapping $f:X\rightarrow Y$ is called {\it $(G_{1}, G_{2})$-continuous} if $f(x)\in C_{G_{2}}(Y)$ and $G_{2}(f(x))=f(G_{1}(x$))for each $x\in  C_{G_{1}}(X)$. $(G_{1}, G_{2})$-continuity is called {\it $G$-continuity} if $G_{1}$ and $G_{2}$ are the same method $G$.
\end{definition}

\begin{definition}\label{def6.3}\cite[Definition 6.1]{life31}
Let $G$ be a method on a set $X$. The family $\tau_{G}=\{A\subset X: A\ \mbox{is}\ G$-$\mbox{open}\}$ is called the {\it generalized topology} on the set $X$.

\smallskip
(1) $\tau_{G}$ is called a {\it $G$-topology} on the set $X$ if it is a topology on $X$.

\smallskip
(2) If $X$ carries a topology $\tau$ then ($X, \tau$)is called {\it $G$-topologizable} if $\tau=\tau_{G}$.
\end{definition}

\begin{proposition}\label{pro3.2}\cite[Theorem 3.12]{life34}
Let $G$ be a regular method preserving the $G$-convergence of subsequences, and $A$ a subset of $X$. Then the following are equivalent:

(1) $a\in (A)_{G}$.

(2) Any sequence {\boldmath$x$}=($x_{n}$) which is $G$-convergent to $a$ is almost in $A$.
\end{proposition}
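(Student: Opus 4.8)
The plan is to prove the equivalence by contraposition in both directions, unwinding the definition of the $G$-kernel $(A)_{G}$ and invoking exactly once the hypothesis that $G$ preserves the $G$-convergence of subsequences. Recall that $(A)_{G}$ consists of those $l\in X$ admitting no sequence $\mathbf{x}\in s(X\setminus A)\cap c_{G}(X)$ with $G(\mathbf{x})=l$, and that a sequence $(x_{n})$ is \emph{almost in} $A$ when $x_{n}\in A$ for all but finitely many $n$, i.e.\ $\{n\in\mathbb{N}:x_{n}\notin A\}$ is finite.

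For $(1)\Rightarrow(2)$ I would argue contrapositively. Suppose some sequence $\mathbf{x}=(x_{n})$ with $G(\mathbf{x})=a$ fails to be almost in $A$; then the index set $J=\{n:x_{n}\notin A\}$ is infinite. Extracting the subsequence $\mathbf{x}'$ indexed by $J$ produces a sequence lying entirely in $X\setminus A$. Since $\mathbf{x}$ is $G$-convergent to $a$ and $G$ preserves the $G$-convergence of subsequences, $\mathbf{x}'\in c_{G}(X)$ with $G(\mathbf{x}')=a$. Thus $\mathbf{x}'\in s(X\setminus A)\cap c_{G}(X)$ witnesses $a\notin(A)_{G}$, which is the contrapositive of the desired implication.

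The converse $(2)\Rightarrow(1)$ is the easier half and needs no subsequence hypothesis. Again arguing contrapositively, if $a\notin(A)_{G}$ then by definition there is a sequence $\mathbf{x}\in s(X\setminus A)\cap c_{G}(X)$ with $G(\mathbf{x})=a$. Every term of $\mathbf{x}$ lies in $X\setminus A$, so $\{n:x_{n}\notin A\}=\mathbb{N}$ is infinite and $\mathbf{x}$ is $G$-convergent to $a$ but not almost in $A$; hence $(2)$ fails.

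The argument is almost entirely a matter of unwinding definitions, so there is no serious obstacle; the single non-formal ingredient is the passage from a $G$-convergent sequence to its subsequence on the infinite index set $J$, which is legitimate precisely because $G$ is assumed to preserve the $G$-convergence of subsequences. I would double-check only that the intended meaning of \emph{almost in} $A$ matches the ``finitely many exceptions'' reading used above, since the whole equivalence hinges on translating ``not almost in $A$'' into ``infinitely many terms outside $A$.''
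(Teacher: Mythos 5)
Your proof is correct; note that the paper itself states this proposition without proof, quoting it as Theorem 3.12 of \cite{life34}, and your argument---extracting the subsequence indexed by the infinite exceptional set $J$ and applying the subsequence-preservation hypothesis for $(1)\Rightarrow(2)$, with pure definition-unwinding of the $G$-kernel for the converse---is exactly the standard argument for this result. Your side observation is also accurate: regularity of $G$ is never needed, and only the hypothesis that $G$ preserves the $G$-convergence of subsequences does any real work, in the single step you identified.
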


\begin{definition}\label{def6.1}
Let $X$ be a group with operations. A topology $\tau$ on the set $X$ is a {\it $G$-topological group with operations} of $X$ provided that the following statements hold:

\smallskip
(1) The multiplication mapping $M: (X, \tau)\times(X, \tau)\rightarrow (X, \tau$) is $G$-continuous;

\smallskip
(2) The inverse mapping $In: (X, \tau)\rightarrow (X, \tau$) is $G$-continuous.

\smallskip
Let $X$ be a $G$-topological group with operations and $a \in X$. A family $\mathcal{B}_{a}$ of $G$-open neighbourhoods of $a$ is called a {\it fundamental system} \cite[pp.1087]{life34} of $G$-open neighbourhoods of $a$ if for each $G$-open neighbourhood $U$ of $a$, there is a $V \in \mathcal{B}_{a}$ such that $V \subset U$.
\end{definition}

First, we prove the following result which plays an important role in this section.

\begin{theorem}\label{the5.1}
A fundamental system $\mathcal{B}_{0}$ of $G$-open neighbourhoods of $0$ on a $G$-topological group $X$ with operations satisfies the following conditions:

(1) If $U \in \mathcal{B}_{0} $, then there exists $V \in  \mathcal{B}_{0}$ such that $V^{-1} \subset U$.

(2) The right translation and the left translation are all $G$-continuous.
\end{theorem}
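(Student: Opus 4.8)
The plan is to prove the two assertions separately, relying on the $G$-continuity of the inverse map and the multiplication map guaranteed by the definition of a $G$-topological group, together with the regularity and subsequence-preserving hypotheses standing in this section. For part (1), I would start with an arbitrary $U \in \mathcal{B}_0$, view it as a $G$-open neighbourhood of $0$, and use that the inverse mapping $In \colon X \to X$ is $G$-continuous. The natural route is to transport $G$-openness backwards along $In$: since $In$ is its own inverse ($In \circ In = \mathrm{id}$) and fixes $0$, the preimage $In^{-1}(U)$ should be a $G$-open neighbourhood of $0$, and then by the definition of a fundamental system there is some $V \in \mathcal{B}_0$ with $V \subset In^{-1}(U)$, which unwinds to $V^{-1} = In(V) \subset U$. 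The key technical point I expect to verify is that $G$-continuity of $In$ indeed pulls $G$-open sets back to $G$-open sets; this is where Proposition~\ref{pro3.2} (characterising the $G$-kernel via almost-all membership of $G$-convergent sequences) and Remark~\ref{rem2.3}(3) ($A$ is $G$-open iff $A \subset (A)_G$) will do the work, checking that any sequence $G$-converging into $In^{-1}(U)$ is eventually inside it.

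For part (2), fix $g \in X$ and consider the right translation $R_g \colon X \to X$, $x \mapsto x + g$ (and symmetrically the left translation $L_g$). I would realise $R_g$ as the composite of the multiplication map $M$ with the insertion $x \mapsto (x, g)$ into the constant second coordinate. Concretely, given a sequence $\boldsymbol{x} = (x_n)$ with $G(\boldsymbol{x}) = l$, I would pair it with the constant sequence $\boldsymbol{g} = (g, g, g, \dots)$; since $G$ is regular, $G(\boldsymbol{g}) = g$, so by Remark~\ref{rem2.3}(1) the product sequence $(\boldsymbol{x}, \boldsymbol{g})$ lies in $c_{\widetilde{G}}(X \times X)$ with $\widetilde{G}(\boldsymbol{x}, \boldsymbol{g}) = (l, g)$. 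Applying the $G$-continuity of $M$ then yields
\[
G(R_g(\boldsymbol{x})) = G(M(\boldsymbol{x}, \boldsymbol{g})) = M(\widetilde{G}(\boldsymbol{x}, \boldsymbol{g})) = M(l, g) = l + g = R_g(G(\boldsymbol{x})),
\]
which is exactly the $G$-continuity of $R_g$. The argument for $L_g$ is identical, placing the constant sequence in the first coordinate.

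The main obstacle I anticipate is part (1): establishing cleanly that $G$-continuity of a self-map (here $In$) transfers $G$-openness under preimage. Unlike translations, $In$ is not obviously a morphism in the sense used for the projection lemmas, so I cannot simply cite one of the earlier product-projection results; instead I must argue directly at the level of $G$-convergent sequences. The safest formulation is: if $U$ is $G$-open then $X \setminus U$ is $G$-closed, and I would show $In^{-1}(U)$ is $G$-open by verifying its complement $In^{-1}(X \setminus U)$ is $G$-closed — i.e.\ if $\boldsymbol{x}$ is a sequence in $In^{-1}(X \setminus U)$ with $G(\boldsymbol{x}) = l$, then applying $G$-continuity of $In$ gives $G(In(\boldsymbol{x})) = In(l)$ with $In(\boldsymbol{x})$ a sequence in the $G$-closed set $X \setminus U$, forcing $In(l) \in X \setminus U$, hence $l \in In^{-1}(X \setminus U)$. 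Once this preimage lemma is in hand, both the $G$-openness of $In^{-1}(U)$ and the final containment $V^{-1} \subset U$ follow routinely from the fundamental-system property.
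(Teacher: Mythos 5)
Your proposal is correct, and part (2) is essentially identical to the paper's argument: the paper likewise pairs the given sequence with the constant sequence at $a$, uses regularity to get $G(\boldsymbol{a})=a$, and pushes the pair through the $G$-continuity of $M$ exactly as in your displayed computation. The genuine difference is in part (1). The paper reduces the claim, as you do, to showing that $W^{-1}$ is $G$-open for each $G$-open neighbourhood $W$ of $0$, but it verifies this via the $G$-kernel characterization: it takes $x\in W^{-1}$ and a sequence $\boldsymbol{x}$ with $G(\boldsymbol{x})=x$, applies $G$-continuity of the inverse map to get $G(\boldsymbol{x}^{-1})=x^{-1}\in W$, invokes Proposition~\ref{pro3.2} to conclude that $\boldsymbol{x}^{-1}$ is almost in $W$ and hence $\boldsymbol{x}$ almost in $W^{-1}$, and then uses Proposition~\ref{pro3.2} again (with Remark~\ref{rem2.3}(3)) to obtain $W^{-1}\subset (W^{-1})_{G}$. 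Your complement route --- showing $In^{-1}(X\setminus U)$ is $G$-closed directly from the definition of $G$-closedness together with $G$-continuity of $In$, then passing to complements --- proves the same key fact ($U^{-1}$ is $G$-open, noting $In^{-1}(U)=U^{-1}$ since $In$ is an involution) more economically: it is purely definitional and avoids Proposition~\ref{pro3.2} altogether, so this step does not actually need the standing assumption that $G$ preserves the $G$-convergence of subsequences, whereas the paper's argument does. The one point you should make explicit is that the definition of $G$-continuity (Definition~\ref{def6.4}) already guarantees $In(\boldsymbol{x})\in c_{G}(X)$ whenever $\boldsymbol{x}\in c_{G}(X)$, which is precisely what licenses applying the $G$-closedness of $X\setminus U$ to the sequence $In(\boldsymbol{x})$; with that noted, your final fundamental-system step ($V\subset U^{-1}$ yields $V^{-1}\subset U$) matches the paper's conclusion.
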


\begin{proof}
(1) Let $U$ be a $G$-open neighbourhood of 0. Then it suffices to prove that the inverse mapping $f:X \to X$ is $G$-open. Hence it suffices to prove that $W^{-1}$ is $G$-open for each $W\in \mathcal{B}_{0} $. Take an arbitrary $W\in\mathcal{B}_{0}$. Then it suffices to prove $W^{-1} \subset (W^{-1})_{G}$. Indeed, pick any $x\in W^{-1}$ and a sequence {\boldmath{$x$}} being $G$({\boldmath{$x$}})$=x$. Next we shall prove that {\boldmath{$x$}} almost in $W^{-1}$ by Proposition~\ref{pro3.2}. Obviously, $x^{-1}\in W$ and $G((${\boldmath{$x$}})$^{-1})=G(f(${\boldmath{$x$}}$))=f(G$({\boldmath{$x$}}))$=f(x)=x^{-1}$ since $f$ is $G$-continuous. Since $W$ is $G$-open and $x^{-1}\in W$, it follows from $G((${\boldmath{$x$}})$^{-1})=x^{-1}$ that {\boldmath{$x$}}$^{-1}$ is almost in $W$ by Proposition~\ref{pro3.2}, hence {\boldmath{$x$}} is almost in $W^{-1}$. By the arbitrary of $x$, We have $W^{-1} \subset (W^{-1})_{G}$. So, $W^{-1}$ is a $G$-open neighbourhood of $0$.

Since $\mathcal{B}_{0}$ is a fundamental system of $G$-open neighbourhoods of 0, there is a $G$-open neighbourhood $V$ of 0 such that $V \subset U^{-}$. Hence $V^{-1}$ is a $G$-open neighbourhood of 0 and $V^{-1} \subset U$ as required.

(2) Fix an arbitrary $a\in X$. We prove that $f$ is $G$-continuous. Choose an arbitrary sequence {\boldmath{$x$}} of the point in $X$ such that $G$({\boldmath{$x$}})=$u$. Since $X$ is a $G$-topological group, it follows that $G(f$({\boldmath{$x$}}))=$G$({\boldmath{$x$}}$a$)=$G$($M$({\boldmath{$x$}}, {\boldmath{$a$}}))=$M$($G$({\boldmath{$x$}}, {\boldmath{$a$}}))=$M(G$({\boldmath{$x$}}), $G$({\boldmath{$a$}}))=$M(u, a)=ua$=$f$($G$({\boldmath{$x$}})), where ({\boldmath{$x$}}, {\boldmath{$a$}})$=\{(x_{n}, a_{n})\}_{n\in\mathbb{N}}$ and $a_{n}=a$ for each $n\in\mathbb{N}$. Thus, the right translation $f$ is $G$-continuous. Similarly one can prove that the left translation  is also $G$-continuous.
\end{proof}

\begin{corollary}
Let $X$ be a $G$-topological group with operations. Then the method $G$ is a translate regular.
\end{corollary}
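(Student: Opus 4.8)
The plan is to obtain this as an immediate consequence of Theorem~\ref{the5.1}(2). By Definition~\ref{def5.2}(2), showing that $G$ is translate regular amounts to verifying the identity $G(g${\boldmath$x$}$)=gG(${\boldmath$x$}$)$ for every $g\in X$ and every {\boldmath$x$}$\in c_{G}(X)$. So I would fix an arbitrary $g\in X$ and let $L_{g}:X\to X$ denote the left translation $x\mapsto gx$; here the sequence $g${\boldmath$x$} is precisely $L_{g}(${\boldmath$x$}$)=(gx_{n})_{n}$.

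By Theorem~\ref{the5.1}(2) the left translation $L_{g}$ is $G$-continuous. Unwinding the definition of $G$-continuity (Definition~\ref{def5.2}(1), equivalently Definition~\ref{def6.4} with $G_{1}=G_{2}=G$) gives, for each {\boldmath$x$}$\in c_{G}(X)$, both that $L_{g}(${\boldmath$x$}$)\in c_{G}(X)$ and that $G(L_{g}(${\boldmath$x$}$))=L_{g}(G(${\boldmath$x$}$))$. Rewriting the two sides with $L_{g}(${\boldmath$x$}$)=g${\boldmath$x$} and $L_{g}(G(${\boldmath$x$}$))=gG(${\boldmath$x$}$)$ yields exactly $G(g${\boldmath$x$}$)=gG(${\boldmath$x$}$)$, which is the required translate-regularity condition.

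There is essentially no hard step here; the content of the corollary is entirely carried by Theorem~\ref{the5.1}(2), whose proof already established the $G$-continuity of both one-sided translations by feeding the constant sequence {\boldmath$a$} into the $G$-continuous multiplication $M$. The only point requiring a moment's care is purely notational: one must recognize that the sequence $g${\boldmath$x$} appearing in the definition of translate regularity is the image of {\boldmath$x$} under the left translation $L_{g}$, so that the abstract $G$-continuity identity $G(f(${\boldmath$x$}$))=f(G(${\boldmath$x$}$))$ specializes to the desired multiplicative identity. Since $g\in X$ was arbitrary, the conclusion holds for all group elements, completing the argument.
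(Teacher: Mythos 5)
Your proposal is correct and matches the paper's intent exactly: the paper states this corollary without proof immediately after Theorem~\ref{the5.1}, relying on part (2) (the $G$-continuity of the left translation) in precisely the way you unwind it, with $G(g\mbox{\boldmath$x$})=G(L_{g}(\mbox{\boldmath$x$}))=L_{g}(G(\mbox{\boldmath$x$}))=gG(\mbox{\boldmath$x$})$. Your note that the only content is the notational identification of $g\mbox{\boldmath$x$}$ with $L_{g}(\mbox{\boldmath$x$})$ is exactly why the paper treats it as immediate.
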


Next we prove that $X$ is a $G$-topology if $X$ is a $G$-topological group.

\begin{theorem}\label{t5.7}
Let $\mathcal{B}_{e}$ be fundamental system of $e$ for a $G$-topological group $X$ with operations. Then, for each $x\in X$, the family $\mathcal{B}_{x}=\{xU:U \in \mathcal{B}_{e}\}$ is a  fundamental system of $x$.
\end{theorem}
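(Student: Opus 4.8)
The plan is to reduce everything to a single structural fact: in a $G$-topological group the left translation by any element carries $G$-open sets to $G$-open sets. Granting this, both requirements in the definition of a fundamental system become immediate. First I would record what must be checked: that each member $xU$ of $\mathcal{B}_{x}$ is genuinely a $G$-open neighbourhood of $x$, and that $\mathcal{B}_{x}$ refines the $G$-open neighbourhoods at $x$, i.e. every $G$-open neighbourhood $W$ of $x$ contains some $xU$ with $U\in\mathcal{B}_{e}$.

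The key step is: \emph{if $A$ is $G$-open and $g\in X$, then $gA$ is $G$-open.} By Remark~\ref{rem2.3}(3) this amounts to showing $gA\subset (gA)_{G}$. Fix $y\in gA$, write $y=gu$ with $u\in A$, and let $\mathbf{z}$ be any sequence with $G(\mathbf{z})=y$. Since left translation is $G$-continuous (Theorem~\ref{the5.1}(2)) and $G$ is translate regular, the sequence $g^{-1}\mathbf{z}=(g^{-1}z_{n})$ is $G$-convergent with $G(g^{-1}\mathbf{z})=g^{-1}G(\mathbf{z})=g^{-1}y=u\in A\subset (A)_{G}$. Because $G$ is a regular method preserving subsequences, Proposition~\ref{pro3.2} applies and yields that $g^{-1}\mathbf{z}$ is almost in $A$; hence $z_{n}\in gA$ for all but finitely many $n$, that is, $\mathbf{z}$ is almost in $gA$. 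As $\mathbf{z}$ was an arbitrary sequence $G$-convergent to $y$, Proposition~\ref{pro3.2} gives $y\in (gA)_{G}$, and since $y\in gA$ was arbitrary, $gA\subset (gA)_{G}$, so $gA$ is $G$-open.

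With this in hand I would finish as follows. For the first requirement, fix $U\in\mathcal{B}_{e}$; then $U$ is a $G$-open set with $e\in U$, so by the key step $xU$ is $G$-open, and $x=xe\in xU$, making $xU$ a $G$-open neighbourhood of $x$. For the refinement requirement, let $W$ be any $G$-open neighbourhood of $x$. Applying the key step with $g=x^{-1}$ shows $x^{-1}W$ is $G$-open, and $e=x^{-1}x\in x^{-1}W$, so $x^{-1}W$ is a $G$-open neighbourhood of $e$. Since $\mathcal{B}_{e}$ is a fundamental system of $G$-open neighbourhoods of $e$, there is $U\in\mathcal{B}_{e}$ with $U\subset x^{-1}W$, whence $xU\subset x(x^{-1}W)=W$. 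Thus $\mathcal{B}_{x}$ is a fundamental system of $x$.

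I expect the main obstacle to be precisely the key step, and in particular verifying that $g^{-1}\mathbf{z}$ remains $G$-convergent with the expected limit and that being $G$-open can be tested sequentially through the $G$-kernel. Both points depend essentially on the standing hypotheses of this section, namely that $G$ is regular and preserves subsequences, without which Proposition~\ref{pro3.2} and hence the ``almost in'' characterisation of the $G$-kernel would be unavailable; translate regularity of $G$ (the corollary following Theorem~\ref{the5.1}) is what actually transports the neighbourhood structure from $e$ to an arbitrary point $x$.
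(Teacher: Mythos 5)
Your proof is correct and follows essentially the same route as the paper: both show $xU\subset (xU)_{G}$ by taking a sequence $\mathbf{z}$ with $G(\mathbf{z})=y\in xU$, using translate regularity to get $G(x^{-1}\mathbf{z})=x^{-1}y\in U$, invoking the ``almost in'' characterisation of Proposition~\ref{pro3.2} to conclude $z_{n}\in xU$ eventually, and then handling the refinement requirement via the $G$-open neighbourhood $x^{-1}V$ of $e$. Your only departure is cosmetic: you isolate the translation step as an explicit lemma (left translates of $G$-open sets are $G$-open) and cite Proposition~\ref{pro3.2} in both directions, where the paper runs the same argument inline and leaves those appeals implicit.
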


\begin{proof}
First, we prove that $xU$ being a $G$-open neighborhood of $x$ for each $U\in\mathcal{B}_{e}$. Then it suffices to prove $xU\subset (xU)_{G}$. Take an arbitrary $y\in xU$ and a sequence {\boldmath{$z$}} such that $G$({\boldmath{$z$}})=$y$. Since $X$ is a $G$-topological group, $G$ is translate regular, hence $G(x^{-1}${\boldmath{$z$}})=$x^{-1}G$({\boldmath{$z$}})=$x^{-1}y\in U$. Since $U$ is $G$-open, there exists $N\in\mathbb{N}$ such that $x^{-1}z_{n}\in U$ for any $n>N$, thus $z_{n}\in xU$ for any $n>N$. Therefore, $xU$ is $G$-open.

Next, let $V$ be an arbitrary $G$-open neighborhood of $x$. We prove that there exists a $W\in\mathcal{B}_{x}$ such that $W\subset V$. By a similar proof above, it can obtain that $x^{-1}V$ is a $G$-open neighborhood of $e$, hence there exists a $V_{1}\in\mathcal{B}_{e}$ such that $V_{1}\subset x^{-1}V$. Put $W=xV_{1}$. Then $W=xV_{1}\subset x x^{-1}V=V$. The proof is complete.
\end{proof}

\begin{theorem}\label{t5.8}
Let $X$ be a $G$-topological group with operations. Then the family $\mathcal{B}=\bigcup_{x\in X}\mathcal{B}_{x}$ is a base for a topology $\sigma$ on $X$. Thus, $X$ is a $G$-topology.
\end{theorem}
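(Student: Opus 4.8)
The plan is to show that the generalized topology $\tau_G$ of all $G$-open subsets of $X$ is in fact a topology by exhibiting $\mathcal{B}$ as a base for it. Recall from Remark~\ref{rem2.3} that $\emptyset$ and $X$ are $G$-open and that an arbitrary union of $G$-open sets is $G$-open; hence the only topology axiom that is not immediate is closure under finite intersection. So the real work is the following \emph{key step}: the intersection of two $G$-open subsets of $X$ is again $G$-open. Granting this, I would then verify the two base axioms for $\mathcal{B}$ and finally identify the topology $\sigma$ it generates with $\tau_G$, which yields that $X$ is a $G$-topology in the sense of Definition~\ref{def6.3}.

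For the key step I would argue through the $G$-kernel. By Remark~\ref{rem2.3}, a set $A$ is $G$-open if and only if $A\subset (A)_G$. Using the standing assumption that $G$ is regular and preserves the $G$-convergence of subsequences, Proposition~\ref{pro3.2} supplies the pointwise description: $a\in(A)_G$ precisely when every sequence $G$-convergent to $a$ is almost in $A$. Now let $A$ and $B$ be $G$-open and take $a\in A\cap B$. Any sequence {\boldmath$x$} with $G(${\boldmath$x$}$)=a$ is almost in $A$ (since $a\in A\subset(A)_G$) and almost in $B$ (since $a\in B\subset(B)_G$); being almost in both, it is almost in $A\cap B$. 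Hence $a\in(A\cap B)_G$, and as $a$ was arbitrary, $A\cap B\subset(A\cap B)_G$, so $A\cap B$ is $G$-open; by induction finite intersections of $G$-open sets are $G$-open. I expect this to be the main obstacle, although Proposition~\ref{pro3.2} reduces it to the elementary observation that ``almost in $A$'' and ``almost in $B$'' together give ``almost in $A\cap B$'', and note that the group structure is not needed here, only the standing hypothesis on $G$.

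Next I would check that $\mathcal{B}=\bigcup_{x\in X}\mathcal{B}_x$ is a base. By Theorem~\ref{t5.7} each $\mathcal{B}_x=\{xU:U\in\mathcal{B}_e\}$ is a fundamental system of $G$-open neighbourhoods of $x$; in particular $\mathcal{B}_x$ is nonempty (apply the fundamental-system property of Definition~\ref{def6.1} to the $G$-open neighbourhood $X$ of $x$) and every member of $\mathcal{B}_x$ contains $x$, so $\mathcal{B}$ covers $X$. For the intersection axiom, take $B_1,B_2\in\mathcal{B}$ and $y\in B_1\cap B_2$. Both $B_i$ are $G$-open, so by the key step $B_1\cap B_2$ is a $G$-open neighbourhood of $y$; since $\mathcal{B}_y$ is a fundamental system of $y$, there is $B_3\in\mathcal{B}_y\subset\mathcal{B}$ with $y\in B_3\subset B_1\cap B_2$. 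Thus $\mathcal{B}$ satisfies the two base axioms and generates a topology $\sigma$ on $X$.

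Finally I would show $\sigma=\tau_G$, which completes the argument. Every member of $\mathcal{B}$ is $G$-open, so every $\sigma$-open set, being a union of members of $\mathcal{B}$, is $G$-open by Remark~\ref{rem2.3}; hence $\sigma\subset\tau_G$. Conversely, if $W$ is $G$-open then for each $x\in W$ the set $W$ is a $G$-open neighbourhood of $x$, so the fundamental system $\mathcal{B}_x$ provides $V_x\in\mathcal{B}$ with $x\in V_x\subset W$, whence $W=\bigcup_{x\in W}V_x\in\sigma$; thus $\tau_G\subset\sigma$. Therefore $\tau_G=\sigma$ is a topology on $X$, that is, $X$ is a $G$-topology.
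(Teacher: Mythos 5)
Your proposal is correct and takes essentially the same approach as the paper: in both, the heart of the argument is the key step that the intersection of two $G$-open sets $U$ and $V$ is $G$-open, proved via Proposition~\ref{pro3.2} under the standing assumption that $G$ is regular and preserves the $G$-convergence of subsequences (any sequence $G$-convergent to a point of $U\cap V$ is almost in $U$ and almost in $V$, hence almost in $U\cap V$). Your write-up is in fact more careful than the paper's, which declares the verification of the base axioms and the identification of $\sigma$ with $\tau_{G}$ to be obvious and spells out only this intersection step.
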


\begin{proof}
Let $$\sigma=\{U\subset X:\ \mbox{for each}\ y\in U\ \mbox{there exists}\ V\in \mathcal{B}_{y}\ \mbox{such that}\ V\subset U\}.$$Then $\sigma$ is a topology on $X$. Obviously, it suffices to prove that $U\cap V\in\sigma$ for any $U, V\in\sigma$. Indeed, it suffices to prove $U\cap V$ being $G$-open, that is, $U\cap V\subset (U\cap V)_{G}$. Take an arbitrary $x\in U\cap V$ and a sequence {\boldmath{$z$}} such that $G$({\boldmath{$z$}})=$x$. Since $U$ and $V$ are $G$-open and $G$ preserving the subsequence, there exists $N\in\mathbb{N}$ such that $z_{n}\in U$ and $z_{n}\in V$ for each $n>N$, that is,  $z_{n}\in U\cap V$ for each $n>N$. Therefore, $U\cap V$ is $G$-open.
\end{proof}

Finally, we discuss the $G$-closure of a subset of $G$-topological group $X$.

\begin{corollary}\label{coro5.5}
Suppose that $X$ is a $G$-topological group with operations, and assume that $H$ is a $G$-topological subgroup with operations of $X$. If $H$ contains a non-empty $G$-open subset of $X$ then $H$ is $G$-open in $X$.
\end{corollary}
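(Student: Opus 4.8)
The plan is to carry over the classical fact that a subgroup of a topological group which contains a non-empty open set is itself open, replacing ordinary openness by $G$-openness throughout. The two tools I would rely on are: first, that left translations send $G$-open sets to $G$-open sets, and second, that an arbitrary union of $G$-open sets is $G$-open, the latter being exactly Remark~\ref{rem2.3}(5).

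First I would fix a non-empty $G$-open subset $U$ of $X$ with $U\subset H$ and choose a point $u_{0}\in U$. For each $h\in H$ I would form the left translate $V_{h}=hu_{0}^{-1}U$, and record three facts about it. Since $h\in H$, $u_{0}\in U\subset H$, and $H$ is a subgroup with operations, the element $hu_{0}^{-1}$ lies in $H$ and $hu_{0}^{-1}U\subset H$; this is a routine closure-under-multiplication-and-inverse check. Next, $h=hu_{0}^{-1}u_{0}\in hu_{0}^{-1}U=V_{h}$, so $V_{h}$ is a $G$-open set with $h\in V_{h}\subset H$. Consequently $H=\bigcup_{h\in H}V_{h}$, since every $h$ is covered by its own $V_{h}$ while each $V_{h}$ stays inside $H$; by Remark~\ref{rem2.3}(5) this union is $G$-open, which would finish the proof.

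The one step carrying real content is the claim that each translate $hu_{0}^{-1}U$ is again $G$-open. I would establish this by repeating the argument already used in the proof of Theorem~\ref{t5.7}: writing $g=hu_{0}^{-1}$ and taking any $y\in gU$ together with a sequence $\mathbf{z}$ with $G(\mathbf{z})=y$, translate regularity of $G$ (available by the corollary following Theorem~\ref{the5.1}) gives $G(g^{-1}\mathbf{z})=g^{-1}y\in U$. Since $U$ is $G$-open and $G$ is a regular method preserving subsequences, Proposition~\ref{pro3.2} then yields an $N$ with $g^{-1}z_{n}\in U$, hence $z_{n}\in gU$, for all $n>N$; thus $\mathbf{z}$ is almost in $gU$, so $y\in(gU)_{G}$ and $gU\subset(gU)_{G}$, i.e. $gU$ is $G$-open. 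I expect this to be the main obstacle only in the sense that it is the single place where the group structure and the standing hypotheses on $G$ are genuinely invoked; once translation-invariance of $G$-openness is in hand, the remainder is just the standard covering argument.
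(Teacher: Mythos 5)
Your proof is correct and is essentially the paper's argument: the paper likewise writes $H$ as a union of $G$-open translates of $U$ (right translates $\varrho_{g}(U)=Ug$ for $g\in H$, with their $G$-openness drawn from Theorems~\ref{t5.7} and~\ref{t5.8}) and concludes via the fact that unions of $G$-open sets are $G$-open. Your left-translate version $H=\bigcup_{h\in H}hu_{0}^{-1}U$, with the translation-invariance of $G$-openness reproved inline from translate regularity and Proposition~\ref{pro3.2}, is a mirror image of the same covering argument that, if anything, makes explicit the step $h=hu_{0}^{-1}u_{0}\in V_{h}\subset H$ which the paper's identity $H=\bigcup_{g\in H}Ug$ leaves implicit.
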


\begin{proof}
Let $U$ be a $G$-open non-empty subset of $X$ with $U \subset H$. Since $X$ is a $G$-topological group, it follows from Theorems~\ref{t5.7} and~\ref{t5.8} that for every $g \in H$ the set $\varrho_{g}(U)=Ug$ is $G$-open in $X$. Therefore, the set $H=\bigcup_{g \in H}Ug$ is $G$-open in $X$.
\end{proof}

By Theorem~\ref{the5.1}, we have the following corollary.

\begin{corollary}\label{coro5.6}
Let $f: X \to H$ be a homomorphism of $G$-topological groups with operations. If $f$ is $G$-continuous at the neutral element $e$ of $X$, then $f$ is $G$-continuous.
\end{corollary}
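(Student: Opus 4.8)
The plan is to deduce global $G$-continuity from $G$-continuity at $e$ by translating an arbitrary $G$-convergent sequence back to the neutral element, which is the standard device for homomorphisms of topological groups. The two ingredients that make this work are the translate regularity of $G$ on a $G$-topological group (the corollary following Theorem~\ref{the5.1}), applied both to the domain $X$ and to the codomain $H$, together with the fact that $f$ is a group homomorphism. Recall that ``$f$ is $G$-continuous at $e$'' means that $G(f(\mathbf{x}))=f(e)$ for every sequence $\mathbf{x}$ with $G(\mathbf{x})=e$.

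First I would fix an arbitrary sequence $\mathbf{x}=(x_{n})$ in $X$ with $G(\mathbf{x})=a$, and form the left-translated sequence $a^{-1}\mathbf{x}=(a^{-1}x_{n})$. Since $X$ is a $G$-topological group, $G$ is translate regular on $X$, so this sequence is again $G$-convergent and $G(a^{-1}\mathbf{x})=a^{-1}G(\mathbf{x})=a^{-1}a=e$. The hypothesis that $f$ is $G$-continuous at $e$ then yields $f(a^{-1}\mathbf{x})\in c_{G}(H)$ with $G(f(a^{-1}\mathbf{x}))=f(e)=e'$, where $e'$ denotes the neutral element of $H$.

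Next I would use that $f$ is a homomorphism to rewrite $f(a^{-1}x_{n})=f(a)^{-1}f(x_{n})$, so that $f(a^{-1}\mathbf{x})=f(a)^{-1}f(\mathbf{x})$ and hence $G\bigl(f(a)^{-1}f(\mathbf{x})\bigr)=e'$. Finally, since $H$ is itself a $G$-topological group, $G$ is translate regular on $H$ as well; applying this with the element $f(a)$ to the $G$-convergent sequence $f(a)^{-1}f(\mathbf{x})$ shows that $f(\mathbf{x})=f(a)\bigl(f(a)^{-1}f(\mathbf{x})\bigr)$ is $G$-convergent with $G(f(\mathbf{x}))=f(a)\,G\bigl(f(a)^{-1}f(\mathbf{x})\bigr)=f(a)e'=f(a)=f(G(\mathbf{x}))$. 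As $\mathbf{x}$ is arbitrary, $f$ is $G$-continuous.

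The step I expect to require the most care is the last one: one must not merely observe that a translate of $f(\mathbf{x})$ is $G$-convergent, but genuinely recover the $G$-convergence of $f(\mathbf{x})$ itself, together with the correct limit. This is exactly where translate regularity of $G$ on $H$ is indispensable, and it is the reason the hypothesis that $H$ is a $G$-topological group (and not merely a group with operations) cannot be dropped. Everything else is a routine transcription of the classical topological-group argument into the language of $G$-methods.
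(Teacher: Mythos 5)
Your proof is correct and follows exactly the route the paper intends: the paper states this corollary without a written proof, merely citing Theorem~\ref{the5.1} (whence translate regularity of $G$ on a $G$-topological group), and your argument --- translating $\mathbf{x}$ to $e$ via $a^{-1}\mathbf{x}$, applying $G$-continuity at $e$, and translating back in $H$ by $f(a)$ using the homomorphism property --- is precisely the standard argument that citation points to. Your closing remark correctly identifies the recovery of $G(f(\mathbf{x}))$ from its translate as the step where translate regularity on the codomain is genuinely needed.
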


By \cite[Theorem 3.9]{life34}, we have the following proposition.

\begin{proposition}\label{the5.7}
Let $A$ be a subgroup with operations of a $G$-topological group $X$. If $A$ is $G$-open, then it is $G$-closed.
\end{proposition}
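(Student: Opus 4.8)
The plan is to imitate the classical fact that an open subgroup of a topological group is automatically closed, carrying out the argument entirely in terms of $G$-open sets. The key structural input is that, under the running assumption of this section, $G$ is a regular method preserving the subsequence, so $G$ is translate regular by the corollary following Theorem~\ref{the5.1}. The strategy is then to exhibit the complement $X\setminus A$ as a union of left cosets of $A$, to show that each such coset is $G$-open, and to conclude via Remark~\ref{rem2.3}(5) that $X\setminus A$ is $G$-open, which by Definition~\ref{def2.1} says exactly that $A$ is $G$-closed.

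First I would record that left translation preserves $G$-openness. This is the computation already performed in the first paragraph of the proof of Theorem~\ref{t5.7}: if $U$ is $G$-open and $x\in X$, then for any $y\in xU$ and any sequence $\mathbf{z}$ with $G(\mathbf{z})=y$, translate regularity gives $G(x^{-1}\mathbf{z})=x^{-1}G(\mathbf{z})=x^{-1}y\in U$; since $U$ is $G$-open, $x^{-1}\mathbf{z}$ is almost in $U$ by Proposition~\ref{pro3.2}, whence $\mathbf{z}$ is almost in $xU$ and $y\in(xU)_{G}$. Thus $xU\subset(xU)_{G}$, i.e. $xU$ is $G$-open by Remark~\ref{rem2.3}(3). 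In particular, since $A$ is $G$-open, every coset $gA$ is $G$-open.

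Next I would carry out the coset decomposition. Because $A$ is a subgroup with operations, in particular a subgroup, its left cosets partition $X$; and for $g\notin A$ the coset $gA$ is disjoint from $A$, since $ga\in A$ with $a\in A$ would force $g=(ga)a^{-1}\in A$, a contradiction. Hence
$$X\setminus A=\bigcup_{g\in X\setminus A}gA.$$
Each summand is $G$-open by the previous step, so by Remark~\ref{rem2.3}(5) the union $X\setminus A$ is $G$-open, and therefore $A$ is $G$-closed by Definition~\ref{def2.1}.

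The only genuinely delicate point is the first step, that left translation sends $G$-open sets to $G$-open sets; everything afterwards is the formal coset bookkeeping. This step is not automatic for an arbitrary method, but under the assumption of this section it follows from translate regularity together with the $G$-kernel characterization of $G$-open sets in Proposition~\ref{pro3.2}, exactly as in Theorem~\ref{t5.7}. I would only be careful to apply that characterization to the arbitrary $G$-open subgroup $A$ rather than to a basic neighbourhood, which is legitimate since the argument never uses that the translated set is a member of a fundamental system.
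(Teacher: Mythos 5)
Your proof is correct, but there is nothing internal to compare it against: the paper supplies no argument for this proposition at all, stating it as an immediate consequence of \cite[Theorem 3.9]{life34}. Your coset argument is the standard one (and, in substance, the argument behind the cited theorem). The delicate point is exactly where you locate it: that left translation preserves $G$-openness, which you correctly extract from the first paragraph of the proof of Theorem~\ref{t5.7}, rightly observing that nothing there uses membership in a fundamental system --- only translate regularity (the corollary to Theorem~\ref{the5.1}) and the $G$-kernel characterization of Proposition~\ref{pro3.2}, both available under the standing assumption of Section 5 that $G$ is a regular method preserving the $G$-convergence of subsequences. Granting that, each coset $gA$ is $G$-open, $X\setminus A=\bigcup_{g\in X\setminus A}gA$ is $G$-open by Remark~\ref{rem2.3}(5), and hence $A$ is $G$-closed by Definition~\ref{def2.1}; the disjointness of cosets uses only the closure of $A$ under the group operation and inversion, which Definition~\ref{def0.0.2} provides. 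Two small remarks: you tacitly use that $A$ is nonempty (so $e\in A$ and $g\in gA$), which is harmless since for $A=\emptyset$ the claim is trivial; and within the paper your first step could alternatively be replaced by an appeal to Proposition~\ref{pro5.9} with the $G$-open set $A$ in the role of $U$ and $X\setminus A$ as the arbitrary subset, since $(X\setminus A)A=X\setminus A$ for a subgroup --- but that proposition appears only after the statement in question, so your self-contained derivation from earlier results is the sounder choice.
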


\begin{theorem}\label{the5.8}
Every $G$-topological group $X$ with operations has a $G$-open base at the identity consisting of symmetric neighbourhoods.
\end{theorem}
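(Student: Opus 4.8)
The plan is to symmetrize each $G$-open neighbourhood of the identity by intersecting it with its own inverse, exactly as in the classical topological-group argument, and then to check that the resulting symmetric sets still form a base because $G$-openness is preserved under inversion and finite intersection.

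First I would isolate the two structural facts that the earlier results already provide. The proof of Theorem~\ref{the5.1}(1) shows that the inverse $W^{-1}$ of any $G$-open neighbourhood $W$ of the identity $e$ is again a $G$-open neighbourhood of $e$; that argument uses only that $W$ is $G$-open, that the inverse mapping is $G$-continuous, and Proposition~\ref{pro3.2}, so it applies to an arbitrary $G$-open neighbourhood and not merely to the members of a fixed fundamental system. The proof of Theorem~\ref{t5.8} shows that the intersection of two $G$-open sets is again $G$-open. These are the only ingredients required.

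Next, for each $G$-open neighbourhood $U$ of $e$ I would set $V = U \cap U^{-1}$. By the first fact $U^{-1}$ is $G$-open, and by the second $V$ is $G$-open; since $e \in U$ and $e = e^{-1} \in U^{-1}$ we have $e \in V$, so $V$ is a $G$-open neighbourhood of $e$. It is symmetric, because $V^{-1} = (U \cap U^{-1})^{-1} = U^{-1} \cap U = V$, and it satisfies $V \subset U$. Letting $\mathcal{B}$ be the family of all such $V$, the inclusion $V \subset U$ together with the fact that the $G$-open neighbourhoods of $e$ already form a fundamental system shows that $\mathcal{B}$ is itself a fundamental system: for any $G$-open neighbourhood $O$ of $e$ the member $O \cap O^{-1} \in \mathcal{B}$ is contained in $O$. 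Hence $\mathcal{B}$ is a $G$-open base at $e$ consisting of symmetric neighbourhoods.

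I do not anticipate a genuine obstacle, since both the $G$-openness of $U^{-1}$ and the $G$-openness of finite intersections are already established; the only point demanding care is the cofinality $V \subset U$, which is precisely what upgrades the symmetric family from a mere collection of neighbourhoods to an actual base.
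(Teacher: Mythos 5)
Your proof is correct and follows essentially the same route as the paper: both symmetrize via $V=U\cap U^{-1}$, relying on Theorem~\ref{the5.1}(1) for the $G$-openness of the inverse and on the finite-intersection fact from the proof of Theorem~\ref{t5.8} (valid here since $G$ preserving subsequences is the standing assumption of this section). Your version is in fact slightly more careful than the paper's, which cites Theorem~\ref{the5.1}(1) alone and leaves the intersection step and the applicability of the inversion argument to arbitrary $G$-open neighbourhoods implicit.
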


\begin{proof}
For an arbitrary $G$-open neighbourhood $U$ of the identity $e$ in $X$, let $V=U \bigcap U^{-1}$. By (1) of Theorem~\ref{the5.1}, the set $V$ is a $G$-open neighbourhood of $e$, and $V=V^{-1} \subset U$.
\end{proof}

\begin{lemma}\label{the5.9}\cite[Theorem 10]{life35}
Let $G$ be a regular method and $A\subseteq X$. If $x\in \overline{A}^{G}$, then for every $G$-open neighborhood $U$ of $x$, we have that $A\cap U\neq \emptyset$.
\end{lemma}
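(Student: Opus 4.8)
The plan is to argue by contraposition, reducing the statement to the defining property of the $G$-closure as the smallest $G$-closed set containing $A$. So I would suppose, contrary to the desired conclusion, that some $G$-open neighbourhood $U$ of $x$ satisfies $A\cap U=\emptyset$, and from this derive that $x\notin\overline{A}^{G}$.

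The first step is to record the set-theoretic consequence of the assumption: since $A\cap U=\emptyset$, we have $A\subseteq X\setminus U$. The second step is to pass to complements: because $U$ is $G$-open, Definition~\ref{def2.1} gives that $X\setminus U$ is $G$-closed. Hence $X\setminus U$ is a $G$-closed set containing $A$.

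The third step is to invoke Definition~\ref{def2.2}(2), according to which $\overline{A}^{G}$ is the intersection of all $G$-closed sets containing $A$, and is therefore contained in each such set. In particular $\overline{A}^{G}\subseteq X\setminus U$. Since $U$ is a neighbourhood of $x$ we have $x\in U$, so $x\notin X\setminus U$, and consequently $x\notin\overline{A}^{G}$. This establishes the contrapositive, and hence the lemma.

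The argument is essentially just the open--closed duality of Definition~\ref{def2.1} combined with the minimality of the $G$-closure, so I do not expect a genuine obstacle. The only point requiring care is to use the definition of $\overline{A}^{G}$ as an intersection of $G$-closed supersets (equivalently, as the smallest $G$-closed set containing $A$) rather than any sequential characterisation; handled this way the implication does not even require regularity of $G$, although regularity is available by hypothesis and is what guarantees $A\subseteq\overline{A}^{G}$ as in Remark~\ref{rem2.3}(4).
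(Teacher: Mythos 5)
Your proof is correct. The paper itself gives no argument for this lemma --- it is quoted from \cite[Theorem 10]{life35} --- so there is no in-paper proof to compare against, but your contrapositive argument is exactly the standard one: if $A\cap U=\emptyset$ then $A\subseteq X\setminus U$, which is $G$-closed by Definition~\ref{def2.1}, so by Definition~\ref{def2.2}(2) the $G$-closure, being the intersection of all $G$-closed supersets of $A$, lies inside $X\setminus U$, forcing $x\notin\overline{A}^{G}$. Your closing observation is also accurate and worth keeping: under this paper's definition of $\overline{A}^{G}$ as an intersection of $G$-closed supersets, the implication needs no regularity of $G$ (regularity only enters to guarantee $A\subseteq\overline{A}^{G}$, as in Remark~\ref{rem2.3}(4)); the hypothesis in the statement is inherited from the cited source rather than used. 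The one point to be careful about, which you handled correctly, is to avoid any sequential characterisation of $\overline{A}^{G}$ (e.g.\ via $[A]_{G}$), since $G$-hull and $G$-closure need not coincide here.
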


\begin{proposition}\label{the5.9.1}
Let $X$ be a $G$-topological group with operations and $A\subset X$. Then $x\in \overline{A}^{G}$ if and only if for each $G$-open neighborhood $U$ of $x$ ones have $A\cap U\neq \emptyset$.
\end{proposition}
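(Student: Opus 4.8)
The plan is to prove the two implications separately, one of which is already available in the excerpt. For the necessity (the ``only if'' direction) nothing new is required. The standing assumption of Section~5 is that $G$ is a regular method preserving the subsequence, so in particular $G$ is regular, and hence Lemma~\ref{the5.9} applies verbatim: it states precisely that $x\in\overline{A}^{G}$ forces $A\cap U\neq\emptyset$ for every $G$-open neighbourhood $U$ of $x$. Thus I would simply invoke Lemma~\ref{the5.9} to dispatch this half.

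For the sufficiency (the ``if'' direction) I would argue by contraposition, showing that if $x\notin\overline{A}^{G}$ then some $G$-open neighbourhood of $x$ misses $A$. I would first record two structural facts. First, $A\subseteq\overline{A}^{G}$, which is immediate since $\overline{A}^{G}$ is by definition the intersection of all $G$-closed sets containing $A$. Second, $\overline{A}^{G}$ is itself $G$-closed. For this I would verify directly from Definition~\ref{def2.1} that an arbitrary intersection $\bigcap_{i}B_{i}$ of $G$-closed sets is $G$-closed: any sequence $\mathbf{x}\in s(\bigcap_{i}B_{i})\cap c_{G}(X)$ lies in each $s(B_{i})\cap c_{G}(X)$, so $G(\mathbf{x})\in B_{i}$ for every $i$, whence $G(\mathbf{x})\in\bigcap_{i}B_{i}$.

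Granting these two facts, the contrapositive is immediate. Assuming $x\notin\overline{A}^{G}$, set $U=X\setminus\overline{A}^{G}$. Since $\overline{A}^{G}$ is $G$-closed, $U$ is $G$-open by Definition~\ref{def2.1}; it contains $x$, so it is a $G$-open neighbourhood of $x$; and because $A\subseteq\overline{A}^{G}$ we have $U\cap A=\emptyset$. This exhibits a $G$-open neighbourhood of $x$ disjoint from $A$, completing the contrapositive and hence the sufficiency.

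I do not expect a genuine obstacle here; the only point deserving care is the second structural fact, namely that the $G$-closure is $G$-closed, which rests on the stability of $G$-closedness under arbitrary intersections. It is worth remarking that neither the topological-group structure nor the subsequence-preservation hypothesis is actually needed for the sufficiency: these enter the statement only through Lemma~\ref{the5.9}, which supplies the necessity and uses the regularity of $G$.
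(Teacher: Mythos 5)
Your proposal is correct and follows essentially the same route as the paper: necessity is dispatched by invoking Lemma~\ref{the5.9}, and sufficiency is proved contrapositively by exhibiting a $G$-open neighbourhood of $x$ disjoint from $A$. The only cosmetic difference is that the paper extracts directly from the definition of $\overline{A}^{G}$ a single $G$-closed set $F\supseteq A$ with $x\notin F$ and uses $X\setminus F$, whereas you take $U=X\setminus\overline{A}^{G}$ and justify it by the (correct) observation that arbitrary intersections of $G$-closed sets are $G$-closed, so $\overline{A}^{G}$ is itself $G$-closed.
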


\begin{proof}
By Lemma~\ref{the5.9}, it suffices to prove the sufficiency.

Assume that $x\not\in \overline{A}^{G}$, then there exists a $G$-closed set $F$ with $A\subset F$ such that $x\not\in F$. Therefore, $x$ belongs to $G$-open set $X\setminus F$. However, $(X\setminus F)\cap A=\emptyset$, which is a contradiction.
\end{proof}

\begin{proposition}\label{pro5.9}
Let $X$ be a $G$-topological group with operations, $U$ an $G$-open subset of $X$, and $A$ any subset of $X$. Then the set $AU$ (respectively, $UA$) is G-open in $X$.
\end{proposition}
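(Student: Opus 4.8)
The plan is to realize $AU$ as a union of left translates of $U$ and then invoke that a union of $G$-open sets is $G$-open. Concretely, since $AU=\bigcup_{a\in A}aU$, Remark~\ref{rem2.3}(5) reduces the problem to showing that each left translate $aU$ of the $G$-open set $U$ is itself $G$-open. By Remark~\ref{rem2.3}(3) this in turn amounts to verifying $aU\subset (aU)_{G}$.

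To prove $aU\subset (aU)_{G}$ I would fix $y\in aU$ and an arbitrary sequence $\mathbf{z}=(z_{n})$ with $G(\mathbf{z})=y$, and then check via Proposition~\ref{pro3.2} that $\mathbf{z}$ is almost in $aU$. This is where the standing hypotheses of the section do the work: since $X$ is a $G$-topological group with operations, the method $G$ is translate regular (the Corollary following Theorem~\ref{the5.1}), so that $G(a^{-1}\mathbf{z})=a^{-1}G(\mathbf{z})=a^{-1}y\in U$. As $U$ is $G$-open we have $a^{-1}y\in U\subset (U)_{G}$ by Remark~\ref{rem2.3}(3), and Proposition~\ref{pro3.2} applied to the $G$-convergent sequence $a^{-1}\mathbf{z}$ produces an $N\in\mathbb{N}$ with $a^{-1}z_{n}\in U$ for all $n>N$. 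Left multiplication by $a$ then gives $z_{n}\in aU$ for all $n>N$, i.e. $\mathbf{z}$ is almost in $aU$, so $y\in (aU)_{G}$. This is essentially the computation already carried out in the proof of Theorem~\ref{t5.7}; the only observation is that it never uses that $U$ is a basic neighborhood of the identity, merely that $U$ is $G$-open.

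The case of $UA=\bigcup_{a\in A}Ua$ is handled symmetrically by showing each right translate $Ua$ is $G$-open. The sole modification is that in place of (left) translate regularity one uses the $G$-continuity of the right translation $x\mapsto xa^{-1}$, which is exactly what part~(2) of Theorem~\ref{the5.1} supplies: for $y\in Ua$ one obtains $G(\mathbf{z}a^{-1})=G(\mathbf{z})a^{-1}=ya^{-1}\in U$, and the remainder of the argument proceeds verbatim.

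I do not anticipate a genuine obstacle; the entire content sits in the (left and right) translate regularity of $G$, which is guaranteed by the Corollary after Theorem~\ref{the5.1} together with the $G$-continuity of translations in Theorem~\ref{the5.1}. The one point that must be stated with care is the appeal to Proposition~\ref{pro3.2}, since its ``almost in'' characterization of the $G$-kernel requires $G$ to be a regular method preserving the $G$-convergence of subsequences; this, however, is precisely the blanket assumption imposed at the opening of Section~5, so the tool is legitimately available throughout.
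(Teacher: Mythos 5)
Your proposal is correct and follows the paper's own argument: the paper's proof is exactly the one-line decomposition $AU=\bigcup_{g\in A}\lambda_{g}(U)$ (respectively $UA=\bigcup_{g\in A}\varrho_{g}(U)$) together with the fact that unions of $G$-open sets are $G$-open. You have merely made explicit the step the paper leaves implicit --- that each translate $aU$ (resp.\ $Ua$) of a $G$-open set is $G$-open, via translate regularity and Proposition~\ref{pro3.2}, which is indeed the computation already contained in the proof of Theorem~\ref{t5.7} and, as you correctly observe, never uses that $U$ is a neighbourhood of the identity.
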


\begin{proof}
Since $AU=\bigcup_{g \in A}\lambda_{g}(U)$ (respectively, $UA=\bigcup_{g \in A}\varrho_{g}(U))$, the conclusion follows.
\end{proof}

\begin{proposition}\label{pro5.10}
Let $X$ be a $G$-topological group with operations. Then, for every subset $A$ of $X$ and every $G$-open neighbourhood $U$ of the neutral element $e$, $\overline{A}^{G} \subset AU$.
\end{proposition}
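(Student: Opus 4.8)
The plan is to reduce the set-inclusion $\overline{A}^{G}\subset AU$ to a single application of the neighbourhood characterisation of the $G$-closure, namely Proposition~\ref{the5.9.1}, after first manufacturing the right $G$-open neighbourhood of the point we start with. Concretely, I would fix an arbitrary $x\in\overline{A}^{G}$ and aim to produce $a\in A$ and $u\in U$ with $x=au$, which is exactly the assertion $x\in AU$. The algebraic translation I will exploit is the elementary equivalence $a\in xU^{-1}\iff x\in aU\iff x=au$ for some $u\in U$, so the whole problem becomes: show that $A$ meets $xU^{-1}$.

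The crux is therefore to verify that $xU^{-1}$ is a genuine $G$-open neighbourhood of $x$. Since $U$ is a $G$-open neighbourhood of the neutral element $e$, I would first note that the inverse mapping is $G$-open by Theorem~\ref{the5.1}(1), so $U^{-1}$ is $G$-open; moreover $e=e^{-1}\in U^{-1}$. Next I would apply Proposition~\ref{pro5.9} with the singleton $\{x\}$ in the role of $A$ and $U^{-1}$ in the role of the $G$-open set, concluding that the left translate $\{x\}U^{-1}=xU^{-1}$ is $G$-open. Finally $x=x\cdot e\in xU^{-1}$, so $xU^{-1}$ is indeed a $G$-open neighbourhood of $x$.

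With that in hand the proof closes immediately: because $x\in\overline{A}^{G}$, Proposition~\ref{the5.9.1} (its ``only if'' direction) guarantees that every $G$-open neighbourhood of $x$ meets $A$, in particular $A\cap xU^{-1}\neq\emptyset$. Choosing $a\in A\cap xU^{-1}$ and writing $a=xu^{-1}$ with $u\in U$ yields $x=au\in AU$, and since $x\in\overline{A}^{G}$ was arbitrary we obtain $\overline{A}^{G}\subset AU$.

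I expect the only step requiring real care to be the assembly of the $G$-open neighbourhood $xU^{-1}$. In the ordinary topological-group argument one simply invokes that left translation is a homeomorphism and inversion is a homeomorphism; here neither is available as such, and $G$-openness is a priori weaker than being open in a topology. So this step must be pieced together from the two facts already established in this section — that inversion carries $G$-open sets to $G$-open sets (Theorem~\ref{the5.1}(1)) and that $\{x\}U^{-1}$ is $G$-open (Proposition~\ref{pro5.9}) — rather than from any single ``homeomorphism'' statement. Once $xU^{-1}$ is seen to be a $G$-open neighbourhood of $x$, everything else is the standard manipulation and the direct appeal to Proposition~\ref{the5.9.1}.
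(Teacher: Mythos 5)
Your proof is correct and follows essentially the same route as the paper's: both arguments manufacture a $G$-open neighbourhood of $x$ by combining inversion and translation of the given neighbourhood of $e$, then apply the neighbourhood characterisation of the $G$-closure (Lemma~\ref{the5.9}, respectively Proposition~\ref{the5.9.1}) to meet $A$ and unwind the algebra. The only cosmetic difference is that the paper first chooses $V$ with $V^{-1}\subset U$ via Theorem~\ref{the5.1}(1) and uses the neighbourhood $xV$ (writing $a=xb$, so $x=ab^{-1}\in AV^{-1}\subset AU$), whereas you invert the set itself and translate, using $xU^{-1}$ directly via Proposition~\ref{pro5.9} --- a slightly more economical packaging of the same ingredients.
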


\begin{proof}
Since $X$ is a $G$-topological group, there exists a G-open neighbourhood $V$ of the neutral element $e$ such that  $V^{-1} \subset U$. Take any $x \in \overline{A}^{G}$, then $xV$ is a $G$-open neighbourhood of $x$ by Theorems~\ref{t5.7} and~\ref{t5.8}. Therefore, it follows from Lemma~\ref{the5.9} that there is $a \in A \bigcap xV$, that is, $a=xb$, for some $b \in V$. Then $x=ab^{-1} \in AV^{-1} \subset AU$, hence, $\overline{A}^{G} \subset AU$.
\end{proof}

\begin{theorem}\label{the5.11}
Let $X$ be a $G$-topological group with operations, and $\mathcal{B}_{e}$ a base of the space $X$ at the neutral element $e$. Then, for every $A$ of $X$, $\overline{A}^{G}=\bigcap\{AU:U\in\mathcal{B}_{e}\}$.
\end{theorem}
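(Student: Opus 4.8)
The plan is to establish the set equality by proving the two inclusions separately, the first being essentially immediate and the second requiring a neighbourhood argument.

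For the inclusion $\overline{A}^{G}\subset\bigcap\{AU:U\in\mathcal{B}_{e}\}$ I would simply invoke Proposition~\ref{pro5.10}. Every member $U$ of the base $\mathcal{B}_{e}$ is a $G$-open neighbourhood of the neutral element $e$, so that proposition gives $\overline{A}^{G}\subset AU$ for each such $U$; intersecting over all $U\in\mathcal{B}_{e}$ yields the desired inclusion with no further work.

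The substantive direction is $\bigcap\{AU:U\in\mathcal{B}_{e}\}\subset\overline{A}^{G}$. Here I would fix $x\in\bigcap\{AU:U\in\mathcal{B}_{e}\}$ and reduce to a neighbourhood test via Proposition~\ref{the5.9.1}: it suffices to show $A\cap W\neq\emptyset$ for every $G$-open neighbourhood $W$ of $x$. Given such a $W$, I use that $G$ is translate regular (the corollary to Theorem~\ref{the5.1}) together with Theorems~\ref{t5.7} and~\ref{t5.8} to see that the translate $x^{-1}W$ is a $G$-open neighbourhood of $e$; since $\mathcal{B}_{e}$ is a base at $e$, there is $V\in\mathcal{B}_{e}$ with $V\subset x^{-1}W$. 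Applying part~(1) of Theorem~\ref{the5.1} to $V$, I choose $U\in\mathcal{B}_{e}$ with $U^{-1}\subset V$. Now the hypothesis $x\in AU$ supplies $a\in A$ and $u\in U$ with $x=au$, whence $a=xu^{-1}\in xU^{-1}$, so $a\in A\cap xU^{-1}$. Chaining the inclusions $xU^{-1}\subset xV\subset x(x^{-1}W)=W$ gives $a\in A\cap W$, and therefore $A\cap W\neq\emptyset$. As $W$ was an arbitrary $G$-open neighbourhood of $x$, Proposition~\ref{the5.9.1} yields $x\in\overline{A}^{G}$, completing this inclusion.

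I expect the only delicate point to be the bookkeeping with the translates: one must be certain that $x^{-1}W$ and $xV$ are genuinely $G$-open sets rather than neighbourhoods in a weaker sense, which is precisely what translate regularity of $G$ and Theorems~\ref{t5.7}--\ref{t5.8} guarantee, and one must apply Theorem~\ref{the5.1}(1) in the correct direction, passing from $V$ to a refinement $U$ with $U^{-1}\subset V$. Once these translation facts are in hand, the remaining manipulations of the products $AU$ and $xU^{-1}$ are routine, and combining the two inclusions gives $\overline{A}^{G}=\bigcap\{AU:U\in\mathcal{B}_{e}\}$.
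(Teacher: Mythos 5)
Your proposal is correct and follows essentially the same route as the paper: the first inclusion via Proposition~\ref{pro5.10}, and the second via the refinement $U^{-1}\subset V\subset x^{-1}W$ from Theorem~\ref{the5.1}(1) together with the algebraic equivalence $x\in AU \Leftrightarrow xU^{-1}\cap A\neq\emptyset$. The only difference is presentational --- you argue the second inclusion directly through the neighbourhood criterion of Proposition~\ref{the5.9.1}, whereas the paper runs the same argument contrapositively (and in fact your version spells out the translation steps, such as the $G$-openness of $x^{-1}W$, that the paper leaves implicit).
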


\begin{proof}
By Proposition \ref{pro5.10}, ones have $\overline{A}^{G} \subset \bigcap\{AU:U\in\mathcal{B}_{e}\}$. It suffices to prove $\bigcap\{AU:U\in\mathcal{B}_{e}\} \subset \overline{A}^{G}$. Let $x \notin \overline{A}^{G}$, then there exists a G-open neighbourhood $W$ of $e$ such that ($xW) \bigcap A=\emptyset$. Take $U$ in $\mathcal{B}_{e}$ satisfying the condition $U^{-1} \subset W$. Then ($xU^{-1}) \bigcap A=\emptyset$, which obviously implies that $AU$ does not contain $x$. (Otherwise, assume $x \in AU$, then ($xU^{-1}) \bigcap A \ne \emptyset$, which is a contradiction). Thus, $\bigcap\{AU:U\in\mathcal{B}_{e}\} \subset \overline{A}^{G}$. Hence, for every $A$ of $X$, $\overline{A}^{G}=\bigcap\{AU:U\in\mathcal{B}_{e}\}$.

Similarly, the equality $\overline{A}^{G}=\bigcap\{UA:U\in\mathcal{B}_{e}\}$ holds for $G$-topological group $X$.
\end{proof}

\begin{proposition}\label{pro5.12}
Let $X$ be a $G$-topological group with operations. Then, for any symmetric subset $A$ of $X$, then $\overline{A}^{G}$ in $X$ is also symmetric.
\end{proposition}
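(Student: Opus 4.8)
The plan is to show the two inclusions $(\overline{A}^{G})^{-1}\subset\overline{A}^{G}$ and $\overline{A}^{G}\subset(\overline{A}^{G})^{-1}$, where $A$ symmetric means $A=A^{-1}$. Since the inverse mapping $In$ is an involution ($In\circ In=\mathrm{id}$), the second inclusion will follow from the first: if $x\in\overline{A}^{G}$ and I have already shown $x^{-1}\in\overline{A}^{G}$, then $x=(x^{-1})^{-1}\in(\overline{A}^{G})^{-1}$. So the whole argument reduces to proving that $x\in\overline{A}^{G}$ implies $x^{-1}\in\overline{A}^{G}$. The natural tool for this is the neighbourhood description of the $G$-closure recorded in Proposition~\ref{the5.9.1}: a point lies in $\overline{A}^{G}$ precisely when every $G$-open neighbourhood of it meets $A$.

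Before the main step I would isolate the fact that the inverse mapping carries $G$-open sets to $G$-open sets; that is, $U^{-1}$ is $G$-open whenever $U$ is $G$-open. This is exactly the computation performed in the proof of Theorem~\ref{the5.1}(1): for $x\in U^{-1}$ and a sequence {\boldmath$x$} with $G(\textbf{x})=x$, one has $x^{-1}\in U$ and, by $G$-continuity of $In$, $G((\textbf{x})^{-1})=x^{-1}$, so {\boldmath$x$}$^{-1}$ is almost in $U$ (Proposition~\ref{pro3.2}) and hence {\boldmath$x$} is almost in $U^{-1}$, giving $U^{-1}\subset(U^{-1})_{G}$. Although Theorem~\ref{the5.1}(1) is phrased only for members of $\mathcal{B}_{0}$, the identical argument applies verbatim to an arbitrary $G$-open set, so the inverse mapping is $G$-open.

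With this in hand the main step is short. Fix $x\in\overline{A}^{G}$ and let $U$ be any $G$-open neighbourhood of $x^{-1}$. Then $U^{-1}$ is $G$-open by the previous paragraph and contains $(x^{-1})^{-1}=x$, so $U^{-1}$ is a $G$-open neighbourhood of $x$. Since $x\in\overline{A}^{G}$, Proposition~\ref{the5.9.1} gives a point $a\in A\cap U^{-1}$; then $a^{-1}\in U$, and $a^{-1}\in A^{-1}=A$ by the symmetry of $A$, so $a^{-1}\in A\cap U\neq\emptyset$. As $U$ was an arbitrary $G$-open neighbourhood of $x^{-1}$, Proposition~\ref{the5.9.1} yields $x^{-1}\in\overline{A}^{G}$, which is what was needed.

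I expect the only delicate point to be the justification that $U^{-1}$ is $G$-open for an \emph{arbitrary} $G$-open $U$, since Theorem~\ref{the5.1}(1) is literally stated for a fundamental system at $0$. If one prefers to avoid re-running that computation, the alternative is to pass to a basic neighbourhood $x^{-1}V\subset U$ with $V\in\mathcal{B}_{e}$ (Theorems~\ref{t5.7} and~\ref{t5.8}), observe that $V^{-1}$ is again a $G$-open neighbourhood of $e$ (Theorem~\ref{the5.8}), and note that the right translate $V^{-1}x=(x^{-1}V)^{-1}$ is a $G$-open neighbourhood of $x$ by the $G$-continuity and $G$-openness of translations; meeting this set with $A$ and inverting then meets $U$ with $A$ exactly as above. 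Either route closes the gap, and the remaining manipulations are purely formal.
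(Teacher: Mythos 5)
Your proof is correct and follows essentially the same route as the paper's: reduce to showing $x\in\overline{A}^{G}$ implies $x^{-1}\in\overline{A}^{G}$, then use Proposition~\ref{the5.9.1} together with the fact that $U^{-1}$ is a $G$-open neighbourhood of $x$ whenever $U$ is a $G$-open neighbourhood of $x^{-1}$, and invert the witness point $a\in A\cap U^{-1}$ using $A=A^{-1}$. Your only divergence is an improvement in care: the paper simply cites Theorem~\ref{the5.1} for the $G$-openness of $U^{-1}$ even though that theorem is stated only for members of $\mathcal{B}_{0}$, whereas you explicitly note this gap and close it by observing that the computation in the proof of Theorem~\ref{the5.1}(1) applies verbatim to an arbitrary $G$-open set.
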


\begin{proof}
Take an arbitrary $x\in\overline{A}^{G}$.It suffices to prove $x^{-1}\in\overline{A}^{G}$. Then it follows from Proposition~\ref{the5.9.1} that it suffices to prove that $U\cap A\neq\emptyset$ for each $G$-open neighborhood $U$ of $x^{-1}$. Let $U$ be an arbitrary $G$-open neighborhood of $x^{-1}$. Since $X$ is a $G$-topological group, it follows from Theorem~\ref{the5.1} that $U^{-1}$ is a $G$-open neighborhood of $x$. Since $x \in \overline{A}^{G}$, it follows from Proposition~\ref{the5.9.1} that $U^{-1}\cap A\neq\emptyset$, thus $U\cap A^{-1}=U\cap A\neq\emptyset$. Therefore, $x^{-1}\in\overline{A}^{G}$.
\end{proof}

{\bf Acknowledgements}. The authors are thankful to the
referee for valuable remarks and corrections and all other sort of help related to the content of this article.


\end{document}